\newcommand{\N}{{\mathbb{N}}}
\newcommand{\R}{{\mathbb{R}}}
\newcommand{\M}{{\mathbb{M}}}
\begin{document}
\title[Pontryagin principle]{Pontryagin principle for a Mayer problem governed by a delay functional differential equation}
\author[Blot  Kon\'e]{Jo\"el Blot and mamadou I. Kon\'e}
\address{Jo\"el Blot: Laboratoire SAMM UE 4543, \newline
Universit\'e Paris 1 Panth\'eon-Sorbonne, centre P.M.F.,\newline
90 rue de Tolbiac, 75634 Paris cedex 13, France.}
\email{blot@univ-paris1.fr}
\address{Mamadou I. Kon\'e: Laboratoire SAMM UE 4543, \newline
Universit\'e Paris 1 Panth\'eon-Sorbonne, centre P.M.F.,\newline
90 rue de Tolbiac, 75634 Paris cedex 13, France.}
\email{mamadou.kone@malix.univ-paris1.fr}
\date{November 9, 2015}
\begin{abstract}
We establish Pontryagin principles for a Mayer's optimal control problem governed by a functional differential equation. The control functions are piecewise continuous and the state functions are piecewise continuously differentiable. To do that, we follow the method created by Philippe Michel for systems governed by ordinary differential equations, and we use properties of the resolvent of a linear functional differential equation.
\end{abstract}

\maketitle
\numberwithin{equation}{section}
\newtheorem{theorem}{Theorem}[section]
\newtheorem{lemma}[theorem]{Lemma}
\newtheorem{example}[theorem]{Example}
\newtheorem{remark}[theorem]{Remark}
\newtheorem{definition}[theorem]{Definition}
\newtheorem{proposition}[theorem]{Proposition}
\noindent
{\bf Key words:} optimal control; Pontryagin principle; functional differential equation.\\
{\bf MSC2010-AMS:} 49J21, 49K21, 34K09.
\section{Introduction}
We consider the following problem of optimal control. It is called a problem of Mayer since its criterion takes into account only the final value of the state; it is governed by a functional differential equation in presence of terminal constraints.
\[
\left.
\begin{array}{rl}
{\rm Maximize} & J(x,u) := g^0(x(T))\\
{\rm when} & x \in C^0([-r,T], \R^n), x \in PC^1([0,T], \R^n)\\
\null & u \in PC^0([0,T], U)\\
\null &\forall t \in [0,T] \setminus F, \;\; x'(t) = f(t, x_t, u(t))\\
\null &x_0 = \phi \\
\null &\forall j = 1,...,n_i, \; g^j(x(T)) \geq 0\\
\null & \forall j= n_i+1,...,n_i+n_e, \; g^j(x(T)) = 0.
\end{array}
\right\} ({\mathfrak M}) 
\]
where the $g^j :  \R^n \rightarrow \R$ are mappings, the state variable $x$ is a piecewise continuously differentiable function (see Section 2), the control variable $u$ is a piecewise continuous function (see Section 2), $U$ is a nonempty subset of $\R^d$, $F$ denotes a finite subset of $[0,T]$ (not a priori fixed), $x_t(\theta) := x(t + \theta)$ when $\theta \in [-r,0]$, $\phi$ is fixed continuous function from $[-r,0]$ into $\R^n$. The only assumptions that we do on this problem are the following ones.
\begin{equation}\label{eq11}
\left.
\begin{array}{l}
f \in C^0([0,T] \times C^0([-r,0], \R^n) \times U, \R^n)\\
\forall (t,\phi,\xi) \in [0,T] \times C^0([-r,0], \R^n) \times U, D_2f(t,\phi, \xi) \; {\rm exists}\\
D_2f \in C^0([0,T] \times C^0([-r,0], \R^n) \times U, {\mathfrak L}( C^0([-r,0], \R^n), \R^n))\\
D_2f \; {\rm is} \; {\rm a} \; {\rm bounded} \; {\rm operator}.
\end{array}
\right\}
\end{equation}
where $C^0(X,Y)$ denotes the space of the continuous mappings from $X$ into $Y$, $D_2f(t,\phi, \xi)$ denotes the partial Fr\'echet differential of $f$ with respect to its second variable, and ${\mathfrak L}(E,E_1)$ is the vector space of the continuous linear mappings from $E$ into $E_1$ when $E$ and $E_1$ are normed vector spaces.
\begin{equation}\label{eq12}
\forall j \in \{ 0,...,n_i + n_e \}, g^j \in C^1(\R^n, \R).
\end{equation}
where $C^1$ means continuously Fr\'echet differentiable.
\vskip1mm
To establish a Pontryagin principle for problem (${\mathfrak M}$) under assumptions which are so light as possible, we follow the method created by Philippe Michel in \cite{Mi} for systems governed by ordinary differential equations. This method is also used in \cite{ATF} for Bolza problems. We can say that this work is an essay to generalize the method of Michel to the setting of systems governed by functional differential equations.
\vskip1mm
On the question of the resolvent of (nonautonomous) linear functional differential equations, the difference between the results that we use (issued from \cite{BK1}) and the results of Banks \cite{Ba} is the choice of the class of solutions: we use continuously differentiable and piecewise continuously  solutions with a continuous vector field and Banks uses absolutely continuous solutions without the continuity of the vector field. On the problem of optimal control, the difference between our setting and the setting of Banks \cite{Ba1} (except that Banks considers a Lagrange problem) is that we use piecewise continuously differentiable state variables and piecewise continuous control variables (as \cite{Mi} and \cite{ATF}) and banks uses absolutely continuous state variables and bounded measurable control variables. 
\vskip1mm
Now we describe the contents of the paper. In Section 2 we specify the notation. In Section 3 we recall some precise properties of the resolvent of linear functional differential equations in the framework of piecewise continuous functions. In Section 4 we give the statement of a Pontryagin principle. In Section 5 we give a proof of this Pontryagin principle.
\section{Notation}
\vskip1mm
$\M_n(\R)$ denotes the space of the real $n \times n$ matrices. $\Vert \cdot \Vert_{\mathfrak L}$ denotes the norm of the linear continuous operators.
\vskip1mm
When $a < b$ are two real numbers,  $C^0_R([a,b], \R^n)$ (respectively $C^0_L([a,b], \R^n)$) is the space of the right-continuous (respectively left-continuous) functions  from $[a,b]$ into $\R^n$, and $C^1([a,b], \R^n)$ is the space of the continuously differentiable functions from $[a,b]$ into $\R^n$.
\vskip1mm
$BV([a,b], \R^n)$ is the space of the bounded variation functions from $[a,b]$ into $\R^n$. When $g \in BV([a,b], \R^n)$ the variation of $g$ on $[a,b]$ is denoted by $V_a^b(g)$. We set $NBV([a,b], \R^n) := \{ g \in BV([a,b], \R^n) \cap C^0_L([a,b], \R^n) : g(a) = 0 \}$.  When $g \in NBV([a,b], \R^n)$, $\Vert g \Vert_{BV} := V_a^b(g)$ defines a norm on $NBV([a,b], \R^n)$. 
\vskip1mm
$AC([a,b], \R^n)$ is the space of the absolutely continuous functions $[a,b]$ into $\R^n$.
\vskip1mm
Let $g : [a,b] \rightarrow \R^n$ be a function, and $t \in [a,b)$ (respectively $(a,b]$) when it exists the right-hand limit (respectively the left-hand limit) of $g$ at $t$ is $g(t+) := \lim_{s \rightarrow t, s >t} g(s)$ (respectively $g(t-) := \lim_{s \rightarrow t, s <t} g(s)$).
\vskip1mm
Let $f : [a,b] \times [c,d] \rightarrow \R^n$ be a mapping, and let $(t,s) \in [a,b) \times [c,d]$ (respectively $(a,b] \times [c,d]$). When it exists the right-partial derivative (respectively left-partial derivative)with respect to the first variable of $f$ at $(t,s)$ is denoted by $\frac{\partial f(t,s)}{\partial t+}$ (respectively $\frac{\partial f(t,s)}{\partial t-}$). 
\vskip1mm
A function $g : [a,b] \rightarrow \R^n$ is called piecewise continuous when it is continuous or when there exists a finite list of points, $t_0=a < t_1< ...< t_p< t_{p+1} = b$ such that $g$ is continuous at each $t \in [a,b] \setminus \{ t_k : k \in \{0, ...,p+1 \} \}$ and such that, for all $k \in \{0,...,p \}$, $g(t_k+)$ exists, and for all $k \in \{ 1,...,p+1 \}$, $g(t_k-)$ exists. We denote by $PC^0([a,b], \R^n)$ the  space of the piecewise continuous functions from $[a,b]$ into $\R^n$. $\Vert g \Vert_{\infty} := \sup \{ \Vert g(t) \Vert : t \in [a,b] \}$ defines a norm on $PC^0([a,b], \R^n)$; endowed with this norm, $PC^0([a,b], \R^n)$ is not complete. When $g \in PC^0([a,b], \R^n)$ we denote by $N_g$ the set points $ t \in [a,b]$ where $g$ is not continuous at $t$. When we fix a finite subset $\pi \subset [a,b]$, we set $PC^0_{\pi}([a,b], \R^n) := \{ g \in PC^0([a,b], \R^n) : N_g \subset \pi \}$. Endowed with $\Vert . \Vert_{\infty}$, $PC^0_{\pi}([a,b], \R^n)$ is a Banach space.
\vskip1mm
A function $g : [a,b] \rightarrow \R^n$ is called piecewise-$C^1$ when $g$ is $C^1$ on $[a,b]$ or when $g \in C^0([a,b], \R^n)$ and there exists a finite list $t_0=a < t_1 < ... < t_p < t_{p+1} = b$ such that $g$ is $C^1$ on $[t_k, t_{k+1}]$ for all $k \in \{ 0, ..., p \}$ and such that, for all $k \in \{ 0,...,p \}$, $g'(t_k +)$ exists and, for all $k \in \{1, ..., p+1 \}$, $g'(t_k -)$ exists. We denote by $PC^1([a,b], \R^n)$  the space of the piecewise-$C^1$  functions from $[a,b]$ into $\R^n$. When $g \in PC^1([a,b], \R^n)$ we denote by $N_{g'}$ the set of the $t \in [a,b]$ such that $g'$ is not continuous at $t$. When we fix a finite subset $\pi \subset [a,b]$, we set $PC^1_{\pi}([a,b], \R^n) := \{ g \in PC^1([a,b], \R^n) : N_{g'} \subset \pi \}$.
\vskip1mm
 In a normed space $E$, when $x \in E$ and $r \in (0, + \infty)$ we set $\overline{B}(x,r) := \{ z \in E : \Vert z -x \Vert \leq r \}$.
\section{Linear functional differential equations}
\subsection{The continuous time framework}
We consider a mapping $L : [0,T] \rightarrow {\mathfrak L}(C^0([-r, 0], \R^n), \R^n)$ which satisfies the following condition.
\begin{equation}\label{eq31}
L \in C^0([0,T], {\mathfrak L}(C^0([-r, 0], \R^n), \R^n)).
\end{equation}
From $L$ and a function $\phi \in C^0([-r, 0], \R^n)$, when $\sigma \in [0,T]$, we consider the following linear functional differential equation under an initial condition.
\begin{equation}\label{eq32}
x'(t) = L(t)x_t, \;\; x_{\sigma} = \phi.
\end{equation}
When moreover $h \in C^0([0,T], \R^n)$ we consider the nonhomogeneous following problem.
\begin{equation}\label{eq33}
x'(t) = L(t)x_t + h(t), \;\; x_{\sigma} = \phi.
\end{equation}
A solution of one of these problems is a function $x \in C^0([-r, T], \R^n)$ which is of class $C^1$ on $[0,T]$ and whom the derative satisfies the equation at each point of $[0,T]$. \\
The only difference between Theorem 4.1 of \cite{BK1} and the following result is the choice of $\eta(t, -r) = 0$ instead of $\eta(t,0) = 0$..
\begin{proposition}\label{prop31}
Under \eqref{eq31} there exists a mapping $\eta : [0,T] \times [-r,0] \rightarrow \M_n(\R)$ which satisfies the following properties.
\begin{enumerate}
\item[(i)] $\forall t \in [0,T]$, $\eta(t,\cdot) \in NBV([-r,0], \M_n(\R))$
\item[(ii)] $\forall t \in [0,T]$, $\Vert \eta(t,\cdot) \Vert_{BV} = \Vert L(t) \Vert_{\mathfrak L}$
\item[(iii)] $[t \mapsto \eta(t,\cdot)] \in C^0([0,T],  NBV([-r,0], \M_n(\R)))$
\item[(iv)] $\forall t \in [0,T]$, $\forall \phi \in C^0([-r, T], \R^n)$, $L(t)\phi = \int_{-r}^0 d_2 \eta(t, \theta) \phi(\theta)$
\item[(v)] $\eta$ is Lebesgue measurable on $[0,T] \times [-r,0]$
\item[(vi)] $\eta$ is Riemann integrable on $[0,T] \times [-r,0]$.
\end{enumerate}
\end{proposition}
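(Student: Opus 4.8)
The plan is to obtain $\eta$ pointwise in $t$ from the Riesz representation theorem and then to upgrade that pointwise construction to the joint regularity in $(t,\theta)$ recorded in (iii), (v) and (vi).

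First I would fix $t \in [0,T]$ and represent $L(t)$. Writing $L(t) = (L^1(t), \dots, L^n(t))$ in components, each $L^i(t)$ is a continuous linear functional on $C^0([-r,0], \R^n)$, so the Riesz representation theorem for the dual of a space of continuous functions provides, for $i,j \in \{1,\dots,n\}$, scalar functions $\eta^{ij}(t,\cdot) \in NBV([-r,0], \R)$ with $L^i(t)\phi = \sum_{j=1}^n \int_{-r}^0 \phi^j(\theta)\, d\eta^{ij}(t,\theta)$. Assembling the matrix $\eta(t,\theta) := (\eta^{ij}(t,\theta))_{i,j}$ yields (i) and the integral representation (iv). The normalization built into $NBV$ (left continuity together with the value $0$ at the left endpoint $-r$) is exactly the one that makes the representative unique, and the sharp form of the Riesz theorem identifies the operator norm with the total variation, giving (ii): $\Vert \eta(t,\cdot)\Vert_{BV} = V_{-r}^0(\eta(t,\cdot)) = \Vert L(t)\Vert_{\mathfrak L}$.

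Next, for (iii), I would exploit the linearity and the uniqueness of the $NBV$ representative. Since $NBV([-r,0], \M_n(\R))$ is a vector space and the representation is linear, $\eta(t,\cdot) - \eta(s,\cdot)$ is the representative of $L(t) - L(s)$; applying the isometry (ii) to this difference gives $\Vert \eta(t,\cdot) - \eta(s,\cdot)\Vert_{BV} = \Vert L(t) - L(s)\Vert_{\mathfrak L}$, so the continuity hypothesis \eqref{eq31} on $L$ transfers verbatim to the continuity of $t \mapsto \eta(t,\cdot)$ into $NBV$. Two elementary consequences prepare the last two items: because both $\eta(t,\cdot)$ and $\eta(s,\cdot)$ vanish at $-r$, one has $\Vert \eta(t,\theta) - \eta(s,\theta)\Vert \le V_{-r}^\theta(\eta(t,\cdot) - \eta(s,\cdot)) \le \Vert \eta(t,\cdot) - \eta(s,\cdot)\Vert_{BV}$ for every $\theta$, so $\eta$ is continuous in $t$ uniformly in $\theta$; and, using $\eta(t,-r)=0$ again, the bound $\Vert \eta(t,\theta)\Vert \le V_{-r}^0(\eta(t,\cdot)) = \Vert L(t)\Vert_{\mathfrak L}$ together with the continuity of $t \mapsto \Vert L(t)\Vert_{\mathfrak L}$ on the compact interval $[0,T]$ shows that $\eta$ is bounded on $[0,T] \times [-r,0]$.

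Finally, (v) and (vi) are the delicate part, since they concern the behaviour of $\eta$ as a function of the two variables jointly. For fixed $t$ the function $\eta(t,\cdot)$ is of bounded variation, hence has at most countably many discontinuities; combined with the uniform-in-$\theta$ continuity in $t$ from the previous step, the triangle inequality shows that $\eta$ is jointly continuous at every point $(t_0,\theta_0)$ at which $\eta(t_0,\cdot)$ is continuous. Consequently the discontinuity set of $\eta$ meets each vertical line $\{t_0\} \times [-r,0]$ in a countable, hence Lebesgue-null, set. Since the discontinuity set of any function is an $F_\sigma$ (a countable union of the closed superlevel sets of the oscillation), it is Borel measurable, so Fubini's theorem applied to its indicator yields that it has planar Lebesgue measure zero. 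As $\eta$ is also bounded, Lebesgue's criterion for Riemann integrability gives (vi), and the measurability (v) then follows either directly from the joint measurability of a Carath\'eodory function (continuous in $t$, Borel in $\theta$) or simply from the fact that a Riemann integrable function is Lebesgue measurable. I expect the main obstacle to be precisely this joint analysis: checking that the two-variable discontinuity set is both measurable and null, so that the classical Lebesgue criterion can be invoked.
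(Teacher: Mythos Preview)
Your argument is correct. The paper itself does not give a self-contained proof of this proposition: it simply records that the statement is Theorem~4.1 of \cite{BK1} with the normalization $\eta(t,-r)=0$ in place of $\eta(t,0)=0$. Your route---Riesz representation for each fixed $t$, the linear isometry $L(t)\mapsto\eta(t,\cdot)$ to transfer continuity and obtain (iii), and then the oscillation/Fubini argument to control the planar discontinuity set for (v) and (vi)---is the standard one and is presumably what \cite{BK1} carries out in detail. The only step where care is genuinely needed is exactly the one you flagged: verifying that the two-variable discontinuity set is Borel (it is, as an $F_\sigma$ via the superlevel sets of the oscillation) and that each of its vertical sections is null (it is, being contained in the countable jump set of a BV function), so that Fubini applies to its indicator; you handled this correctly, and Lebesgue's criterion then closes the argument.
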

The following result is devoted to the resolvents of the equations of \eqref{eq32} and \eqref{eq33}. It is proven in \cite{BK1}.
\begin{theorem}\label{th32}
Under \eqref{eq31} there exists a mapping $X : [0,T] \times [0,T] \rightarrow \M_n(\R)$ which satisfies the following properties.
\begin{enumerate}
\item[(i)] $X$ is bounded on $[0,T] \times [0,T]$
\item[(ii)] $\forall s, t \in [0,T]$ such that $s \geq t$, $X(t,s) = I$ (identity)
\item[(iii)]  $\forall s \in [0,T]$, $X(\cdot,s) \in AC([0,T], \M_n(\R))$
\item[(iv)] $\forall s, t \in [0,T]$, $X(\cdot,s)$ is right-differentiable and left-differentiable at $t$, \\
$\frac{\partial X(\cdot,s)}{\partial t+} \in C^0_R([0,T], \M_n(\R))$, 
$\frac{\partial X(\cdot,s)}{\partial t-} \in C^0_L([0,T], \M_n(\R))$
\item[(v)] $\forall s, t \in [0,T]$ such that $s \geq t$, $\frac{\partial X(t,s)}{\partial t+} - \frac{\partial X(t,s)}{\partial t-} = \eta(t, (s-t)+) - \eta(t, (s-t))$
\item[(vi)] $\forall s \in [0,T]$, the set of the points of $[0,T]$ where $X(\cdot,s)$ is not differentiable is at most countable
\item[(vii)] $\forall t \in [0,T]$, $X(t,\cdot) \in BV([0,T], \M_n(\R)) \cap C^0_L([0,T], \M_n(\R))$.
\end{enumerate}
We define the mapping $Z : \{(t,\sigma) \in [0,T] \times [0,T] : t \geq \sigma \} \times C^0([-r,0] , \R^n) \rightarrow \R^n$ by setting
\[
Z(t,\sigma, \phi) := \int_{\sigma}^t X(t,\xi) \left( \int_{-r}^{\sigma - \xi} d_2 \eta(\xi, \theta) \phi(\xi - \sigma + \theta) \right) d \xi.
\]
Then the following properties hold for all $\sigma \in [0,T]$ and for all $\phi \in  C^0([-r,0] , \R^n)$.
\begin{enumerate}
\item[(viii)] $Z(\cdot,\sigma, \phi) \in C^0([ \sigma, T], \R^n)$
\item[(ix)] $\forall t \in [\sigma,T]$, $Z(\cdot,\sigma, \phi)$ is right-differentiable and left-differentiable at $t$, \\
$\frac{\partial Z(\cdot,\sigma, \phi)}{\partial t+} \in C^0_R(([ \sigma, T], \R^n)$, and $\frac{\partial Z(\cdot,\sigma, \phi)}{\partial t-} \in C^0_L(([ \sigma, T], \R^n)$
\item[(x)] $\forall t \in [\sigma,T]$, $\frac{\partial Z(t,\sigma, \phi)}{\partial t+} - \frac{\partial Z(t,\sigma, \phi)}{\partial t-} = 
\frac{\partial X(t,s)}{\partial t-} - \frac{\partial X(t,s)}{\partial t+}$.
\end{enumerate}
We define the set ${\mathfrak D} := \{ (t,\sigma) \in [0,T] \times [0,T] : t \geq \sigma -r \}$ and we define the mapping $U : {\mathfrak D} \times  C^0([-r,0] , \R^n) \rightarrow \R^n$ by setting
\[ 
U(t, \sigma, \phi) := \left\{
\begin{array}{lcl}
X(t, \sigma) \phi(0) + Z(t, \sigma, \phi) & {\rm if} & t \geq \sigma\\
\phi(t- \sigma) & {\rm if} & \sigma -r \leq t \leq \sigma.
\end{array}
\right.
\]
Then the following assertions hold for all $\sigma \in [0,T]$ for all $\phi \in C^0([-r,0] , \R^n)$.
\begin{enumerate}
\item[(xi)] $U(\cdot, \sigma, \phi)_{\vert_{[\sigma, t]}} \in C^1([\sigma, T], \R^n)$, and $U(\cdot, \sigma, \phi) \in C^0([\sigma -r,T], \R^n)$
\item[(xii)] $U(\cdot, \sigma, \phi)$ is the (unique) solution of \eqref{eq32} on $[\sigma, T]$.
\end{enumerate}
Moreover we define the mapping $V : {\mathfrak D} \times  C^0([-r,0] , \R^n) \times C^0([0,T], \R^n) \rightarrow \R^n$ by setting
\[ 
V(t, \sigma, \phi, h) := \left\{
\begin{array}{lcl}
U(t, \sigma, \phi) + \int_{\sigma}^t X(t, \alpha)h(\alpha) d \alpha & {\rm if} & t \geq \sigma\\
\phi(t- \sigma) & {\rm if} & \sigma -r \leq t \leq \sigma.
\end{array}
\right.
\]
Then the following assertions hold for all $\sigma \in [0,T]$ for all $\phi \in C^0([-r,0] , \R^n)$ and for all $h \in C^0([0,T), \R^n)$
\begin{enumerate}
\item[(xiii)] $V(\cdot, \sigma, \phi, h)_{\vert_{[\sigma, T]}} \in C^1([\sigma, T], \R^n)$ and $V(\cdot, \sigma, \phi, h) \in C^0([\sigma -r, T], \R^n)$
\item[(xiv)] $V(\cdot, \sigma, \phi, h)$ is the (unique) solution of \eqref{eq33} on $[\sigma, T]$.
\end{enumerate}
\end{theorem}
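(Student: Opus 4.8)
The plan is to build the resolvent $X$ first, to deduce its regularity (i)--(vii) from the equation it satisfies, and only then to treat $Z$, $U$ and $V$, whose construction is explicit in terms of $X$ and $\eta$, so that the remaining assertions reduce to differentiation under the integral sign together with a uniqueness argument. First I would invoke Proposition~\ref{prop31} to replace $L$ by its Stieltjes kernel, so that \eqref{eq32} reads $x'(t)=\int_{-r}^0 d_2\eta(t,\theta)\,x(t+\theta)$. Reading this as an equation for the matrix-valued fundamental solution, I would define $X(\cdot,s)$ as the function equal to $I$ for $t\le s$ and solving, for $t\ge s$, the integral equation obtained by integrating the associated matrix equation from $s$ to $t$. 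Existence and uniqueness follow from a successive-approximation scheme on successive intervals of length comparable to $r$, the contraction being controlled by $\sup_t\Vert L(t)\Vert_{\mathfrak L}=\sup_t\Vert\eta(t,\cdot)\Vert_{BV}$ via Proposition~\ref{prop31}(ii). The normalization (ii) and the boundedness (i) are then built into the construction, and (iii) is immediate because the right-hand side of the integral equation is an $L^\infty$, hence integrable, function of $t$.

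The delicate regularity is (iv)--(vii). For (iv) and (v) I would differentiate the integral equation from the right and from the left at a fixed $t$: the integrand pairs $X(t+\theta,s)$ against the measure $d_2\eta(t,\theta)$, the one-sided derivatives exist because $X(\cdot,s)$ is absolutely continuous and the kernel is continuous in $t$, and the jump of the derivative at a time $t$ with $s\ge t$ comes exactly from the atom of $\theta\mapsto\eta(t,\theta)$ sitting at $\theta=s-t$, which produces $\partial_{t+}X(t,s)-\partial_{t-}X(t,s)=\eta(t,(s-t)+)-\eta(t,(s-t))$. Assertion (vi) then follows because $X(\cdot,s)$ can fail to be differentiable only at the times for which $\eta(t,\cdot)$ carries an atom at $\theta=s-t$, and this set is at most countable by the bounded-variation structure of $\eta$. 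Finally (vii), the $BV$ and left-continuity in the backward variable $s$, I would obtain by transporting the corresponding regularity of $\theta\mapsto\eta(t,\theta)$ through the integral equation.

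With $X$ in hand, the assertions on $Z$, $U$ and $V$ are largely bookkeeping. For (viii)--(x) I would differentiate $Z(\cdot,\sigma,\phi)$ under the integral sign, using (iii)--(v) to transfer the one-sided differentiability and the jump of $X$ to $Z$. For (xi)--(xii) I would verify by direct substitution that $t\mapsto X(t,\sigma)\phi(0)+Z(t,\sigma,\phi)$ satisfies \eqref{eq32} on $[\sigma,T]$ and coincides with $\phi$ on $[\sigma-r,\sigma]$, the continuous gluing at $t=\sigma$ being guaranteed by $X(\sigma,\sigma)=I$ and $Z(\sigma,\sigma,\phi)=0$; uniqueness is a standard Gronwall estimate on the difference of two solutions. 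For (xiii)--(xiv) I would establish the classical variation-of-constants identity: differentiating $\int_\sigma^t X(t,\alpha)h(\alpha)\,d\alpha$ yields the boundary term $X(t,t)h(t)=h(t)$ plus $\int_\sigma^t \partial_t X(t,\alpha)\,h(\alpha)\,d\alpha$, and a Fubini interchange against $d_2\eta(t,\theta)$ identifies the latter with $L(t)$ applied to the history of the integral, so that $V$ solves \eqref{eq33}.

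I expect the main obstacle to be the one-sided differentiability and the jump formula (v). It forces a careful treatment of the Stieltjes integral precisely at the value $\theta=s-t$ where the delay kernel concentrates its mass, since this is exactly where the absolutely continuous dependence of $X$ on $t$ breaks into genuine corners; every downstream statement, in particular (x) and the rigorous differentiation behind (xii) and (xiv), relies on getting this jump bookkeeping right.
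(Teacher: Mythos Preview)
Your sketch is a reasonable outline, but you should be aware that the paper does not actually prove Theorem~\ref{th32}: immediately after the statement it simply records that ``the proof of this theorem is contained into Section~6 of \cite{BK1}'', noting only two cosmetic normalization changes in the definition of $NBV$ and in the extension of $\eta$. So there is nothing in the present paper to compare your argument against line by line.

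That said, one can infer from the proof of Proposition~\ref{prop35} what the construction in \cite{BK1} looks like, and it differs from yours. There the authors first build an auxiliary Volterra resolvent kernel $R$ solving $R(t,s)=k(t,s)-\int_s^t R(t,\alpha)k(\alpha,s)\,d\alpha$ with $k(\alpha,s)=\eta(\alpha,s-\alpha)$, and then \emph{define} $X(t,s):=I-\int_s^t R(\alpha,s)\,d\alpha$. The regularity of $X$ in both variables, in particular the left-continuity and bounded variation in $s$ claimed in (vii), is then read off from the corresponding properties of $R$ and $k$. Your plan instead constructs $X(\cdot,s)$ directly by Picard iteration on the forward matrix equation and tries to recover the $s$-regularity (vii) a posteriori ``by transporting the corresponding regularity of $\theta\mapsto\eta(t,\theta)$ through the integral equation''. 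This is the weakest link of your outline: the forward equation gives you excellent control in $t$ for fixed $s$, but it does not by itself produce $BV\cap C^0_L$ regularity in $s$, and you have not indicated which estimate would do that. The advantage of the $R$-kernel route of \cite{BK1} is precisely that it treats both variables symmetrically through a single Volterra equation, so that (vii) and the adjoint identity of Proposition~\ref{prop35} fall out together. Your direct approach is conceptually simpler for (i)--(vi), (viii)--(xiv), but you would need an additional argument---essentially the $R$-kernel or an equivalent backward equation---to close (vii).
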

\vskip2mm
The proof of this theorem is contained into Section 6 of \cite{BK1}. The only differences between the results of \cite{BK1} and the present paper are the replacing of the condition $g(b) = 0$ by $g(a) = 0$ in the definition of $NBV([a,b], \R^n)$ and the extension of $\eta$ into $\eta^1$ with $\eta^1(t,\theta) = 0$ when $\theta < -r$ instead of $\eta(t,\theta) = 0$ when $\theta > 0$.
\subsection{The piecewise continuous time framework}
Instead of the continuity of the vector field $L$ we assume that
\begin{equation}\label{eq34}
L \in PC^0([0,T], {\mathfrak L}(C^0([-r,0],\R^n), \R^n)).
\end{equation}
When $\phi \in C^0([-r,0],\R^n)$, when $\sigma \in [0,T]$, and when $h \in PC^0([0,T], \R^n)$, we consider the following problems.
\begin{equation}\label{eq35}
\forall t \in [0, T] \setminus N_{x'}, x'(t) = L(t)x_t, \; \; x_{\sigma} = \phi
\end{equation}
\begin{equation}\label{eq36}
\forall t \in [0, T] \setminus N_{x'}, x'(t) = L(t)x_t + h(t), \; \; x_{\sigma} = \phi.
\end{equation}
A solution of \eqref{eq35} or of \eqref{eq36} is a function $x \in PC^1([0,T], \R^n)$; more precisely $x \in PC^1_{N_L}([0,T], \R^n)$ for \eqref{eq35} and 
$x \in PC^1_{N_L \cup N_h}([0,T], \R^n)$ for \eqref{eq36}. We can deduce the results of the piecewise continuous time from those of the continuous time framework by proceeding in the following way. If $0 = t_0 < t_1 < ...< t_p < t_{p+1} = T$ are the points of $N_L$, for all $k \in \{ 0,..., p \}$ we denote by $L_k$ the (continuous) restriction of $L$ at $[t_k, t_{k+1}]$. When we fix $\sigma \in [0,T)$ we consider the index $m$ such $\sigma < t_m < ... <t_{p+1} = T$ and we split the problem \eqref{eq35} into a finite list of problems like \eqref{eq32} as follows: first we have a solution $z^m$ of the problem $(x'(t) = L_{m-1}(t)x_t, x_{\sigma} = \phi)$ on $[\sigma, t_m]$, secondly we have the solution $z^{m+1}$ of the problem $(x'(t) = L_m (t)x_t, x_{\sigma} = \phi_m)$  on $[t_m, t_{m+1}]$, where $\phi_m(\theta) :=  z^m(t_m + \theta)$ for $\theta \in [-r,0]$, and inductively until to have a solution $z^{p+1}$ of the problem $(x'(t) = L_{p}(t)x_t, x_{\sigma} = \phi_p)$  on $[t_p, t_{p+1}]$, where $\phi_p(\theta) :=  z^p(t_p + \theta)$ for $\theta \in [-r,0]$. Then the function $z : [-t,T] \rightarrow \R^n$, defined by $z(t) := z^k(t)$ when $t \in [t_{k-1}, t_k]$, is a solution of \eqref{eq35}. 
\vskip1mm
Moreover using Proposition \ref{prop31}, for all $k \in \{ 0, ..., p \}$, we obtain the existence of $\eta_k :[t_k, t_{k+1}] \times [-r, 0] \rightarrow \M_n(\R)$ which satisfies the conclusions of Proposition \ref{prop31} where $[t_k, t_{k+1}]$ replaces $[0,T]$ and where $L_k$ replaces $L$. We define $\eta : [0,T] \times [-r,0] \rightarrow \M_n(\R)$ by setting $\eta(t, \theta) := \eta_k(t, \theta)$ when $t \in [t_k, t_{k+1})$, $ \theta \in [-r,0]$ when $k \in \{ 0, ..., p-1 \}$ and $\eta(t, \theta) := \eta_p(t, \theta)$ when $t \in [t_p, T]$, $\theta \in [-r,0]$. And so, from Proposition \ref{prop31} we deduce the following result.
\begin{proposition}\label{prop33}
Under \eqref{eq34} there exists a mapping $\eta : [0,T] \times [-r,0] \rightarrow \M_n(\R)$ which satisfies the following properties.
\begin{enumerate}
\item[(i)] $\forall t \in [0,T]$, $\eta(t,\cdot) \in NBV([-r,0], \M_n(\R))$
\item[(ii)] $\forall t \in [0,T]$, $\Vert \eta(t,\cdot) \Vert_{BV} = \Vert L(t) \Vert_{\mathfrak L}$
\item[(iii)] $[t \mapsto \eta(t,\cdot)] \in PC^0_{N_L}([0,T],  NBV([-r,0], \M_n(\R)))$
\item[(iv)] $\forall t \in [0,T]$, $\forall \phi \in C^0([-r, T], \R^n)$, $L(t)\phi = \int_{-r}^0 d_2 \eta(t, \theta) \phi(\theta)$
\item[(v)] $\eta$ is Lebesgue measurable on $[0,T] \times [-r,0]$
\item[(vi)] $\eta$ is Riemann integrable on $[0,T] \times [-r,0]$.
\end{enumerate}
\end{proposition}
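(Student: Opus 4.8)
The plan is to verify, one by one, that the glued mapping $\eta$ constructed just above inherits each of the six conclusions from the pieces $\eta_k$ to which Proposition \ref{prop31} applies. I keep the notation of the construction: $0 = t_0 < t_1 < \dots < t_p < t_{p+1} = T$ is the partition adapted to $N_L$, $L_k$ is the continuous restriction of $L$ to $[t_k, t_{k+1}]$, and $\eta_k : [t_k, t_{k+1}] \times [-r, 0] \to \M_n(\R)$ is the mapping furnished by Proposition \ref{prop31} applied to $L_k$. The pointwise conclusions (i), (ii) and (iv) then come for free: each fixed $t \in [0,T]$ lies in exactly one block on which $\eta(t, \cdot) = \eta_k(t, \cdot)$ and $L_k(t) = L(t)$, so these three items follow verbatim from the corresponding items of Proposition \ref{prop31} for that single value of $t$, with no interaction between blocks.

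Next I would establish the genuinely new conclusion (iii), the piecewise continuity of $t \mapsto \eta(t, \cdot)$ as a map into the Banach space $NBV([-r,0], \M_n(\R))$. On each block $[t_k, t_{k+1})$ this map coincides with $t \mapsto \eta_k(t, \cdot)$, which item (iii) of Proposition \ref{prop31} renders continuous on the whole closed interval $[t_k, t_{k+1}]$. Hence $t \mapsto \eta(t, \cdot)$ is continuous on $[0,T] \setminus \{t_1, \dots, t_p\}$, and at each seam $t_k$ the right-hand limit equals $\eta_k(t_k, \cdot)$ while the left-hand limit equals $\eta_{k-1}(t_k, \cdot)$, both existing by the continuity of the neighbouring pieces. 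This is exactly the membership $[t \mapsto \eta(t, \cdot)] \in PC^0_{N_L}([0,T], NBV([-r,0], \M_n(\R)))$.

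Finally, measurability (v) and Riemann integrability (vi) are preserved by the finite gluing. For (v), $\eta$ is measurable on each rectangle $[t_k, t_{k+1}] \times [-r,0]$ by Proposition \ref{prop31}(v), and a map agreeing with measurable maps on the members of a finite measurable partition of its domain is measurable. For (vi) I would invoke the Lebesgue criterion: $\eta$ is bounded, being bounded on each of the finitely many rectangles, and its set of discontinuities is contained in the union of the null discontinuity sets of the $\eta_k$ together with the finitely many segments $\{t_k\} \times [-r,0]$, which are Lebesgue-null in $\R^2$; so $\eta$ is continuous almost everywhere and therefore Riemann integrable.

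I expect the only delicate point to be the behaviour at the seam abscissae $t_k$. One must check that the definition by cases is consistent and that the value convention there is compatible with the pointwise identities (ii) and (iv): the half-open blocks assign each $t_k$ to the upper piece $\eta_k$, hence to the right-hand value of $L$ at $t_k$, so that with $L$ taken right-continuous at its jumps no ambiguity remains. Everything away from these finitely many abscissae is a block-by-block transcription of Proposition \ref{prop31}.
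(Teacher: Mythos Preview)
Your proposal is correct and follows exactly the paper's approach: the paper's argument consists only of the gluing construction in the paragraph preceding the statement together with the one-line assertion ``And so, from Proposition \ref{prop31} we deduce the following result,'' and you have simply supplied the block-by-block verification that the paper omits. Your flagging of the seam-point convention (assigning $t_k$ to the right block $\eta_k$, consistent with a right-continuous normalization of $L$) is the only point requiring care, and you handle it appropriately.
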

Now we want to obtain a result which is analogous to Theorem \ref{th32} for the piecewise continuous time framework. We proceed as in \cite{BK1}, replacing the property $[t \mapsto \eta(t,\cdot)] \in C^0([0,T], NBV([-r,0], \M_n(\R))$ by the property $[t \mapsto \eta(t,\cdot)] \in PC^0_{N_L}([0,T], NBV([-r,0], \M_n(\R))$ and then we obtain the following result.
\begin{theorem}\label{th34}
Under \eqref{eq34} there exists a mapping $X : [0,T] \times [0,T] \rightarrow \M_n(\R)$ which satisfies the following properties.
\begin{enumerate}
\item[(i)] $X$ is bounded on $[0,T] \times [0,T]$
\item[(ii)] $\forall s, t \in [0,T]$ such that $s \geq t$, $X(t,s) = I$ (identity)
\item[(iii)]  $\forall s \in [0,T]$, $X(\cdot,s) \in AC([0,T], \M_n(\R))$
\item[(iv)] $\forall s, t \in [0,T]$, $X(\cdot,s)$ is right-differentiable and left-differentiable at $t$, \\
$\frac{\partial X(\cdot,s)}{\partial t+} \in C^0_R([0,T], \M_n(\R))$, 
$\frac{\partial X(\cdot,s)}{\partial t-} \in C^0_L([0,T], \M_n(\R))$
\item[(v)] $\forall s \in [0,T]$, $\forall t \in [0,T] \setminus N_L$ such that $s \geq t$, \\
$\frac{\partial X(t,s)}{\partial t+} - \frac{\partial X(t,s)}{\partial t-} = \eta(t, (s-t)+) - \eta(t, (s-t))$
\item[(vi)] $\forall s \in [0,T]$, the set of the points of $[0,T]$ where $X(\cdot,s)$ is not differentiable is at most countable
\item[(vii)] $\forall t \in [0,T]$, $X(t,\cdot) \in BV([0,T], \M_n(\R)) \cap C^0_L([0,T], \M_n(\R))$.
\end{enumerate}
We define the mapping $Z : \{(t,\sigma) \in [0,T] \times [0,T] : t \geq \sigma \} \times C^0([-r,0] , \R^n) \rightarrow \R^n$ by setting
\[
Z(t,\sigma, \phi) := \int_{\sigma}^t X(t,\xi) \left( \int_{-r}^{\sigma - \xi} d_2 \eta(\xi, \theta) \phi(\xi - \sigma + \theta) \right) d \xi.
\]
Then the following properties hold for all $\sigma \in [0,T]$ and for all $\phi \in  C^0([-r,0] , \R^n)$.
\begin{enumerate}
\item[(viii)] $Z(\cdot,\sigma, \phi) \in C^0([ \sigma, T], \R^n)$
\item[(ix)] $\forall t \in [\sigma,T]$, $Z(\cdot,\sigma, \phi)$ is right-differentiable and left-differentiable at $t$, \\
$\frac{\partial Z(\cdot,\sigma, \phi)}{\partial t+} \in C^0_R(([ \sigma, T], \R^n)$, and $\frac{\partial Z(\cdot,\sigma, \phi)}{\partial t-} \in C^0_L(([ \sigma, T], \R^n)$
\item[(x)] $\forall t \in [\sigma,T]\setminus N_L$, $\frac{\partial Z(t,\sigma, \phi)}{\partial t+} - \frac{\partial Z(t,\sigma, \phi)}{\partial t-} = 
\frac{\partial X(t,s)}{\partial t-} - \frac{\partial X(t,s)}{\partial t+}$.
\end{enumerate}
We define the set ${\mathfrak D} := \{ (t,\sigma) \in [0,T] \times [0,T] : t \geq \sigma -r \}$ and we define the mapping $U : {\mathfrak D} \times  C^0([-r,0] , \R^n) \rightarrow \R^n$ by setting
\[ 
U(t, \sigma, \phi) := \left\{
\begin{array}{lcl}
X(t, \sigma) \phi(0) + Z(t, \sigma, \phi) & {\rm if} & t \geq \sigma\\
\phi(t- \sigma) & {\rm if} & \sigma -r \leq t \leq \sigma.
\end{array}
\right.
\]
Then the following assertions hold for all $\sigma \in [0,T]$ for all $\phi \in C^0([-r,0] , \R^n)$.
\begin{enumerate}
\item[(xi)] $U(\cdot, \sigma, \phi)_{\vert_{[\sigma, t]}} \in PC^1([\sigma, T], \R^n)$, and $U(\cdot, \sigma, \phi) \in C^0([\sigma -r,T], \R^n)$
\item[(xii)] $U(\cdot, \sigma, \phi)$ is the (unique) solution of \eqref{eq35} on $[\sigma, T]$.
\end{enumerate}
Moreover we define the mapping $V : {\mathfrak D} \times  C^0([-r,0] , \R^n) \times C^0([0,T], \R^n) \rightarrow \R^n$ by setting
\[ 
V(t, \sigma, \phi, h) := \left\{
\begin{array}{lcl}
U(t, \sigma, \phi) + \int_{\sigma}^t X(t, \alpha) h(\alpha) d \alpha & {\rm if} & t \geq \sigma\\
\phi(t- \sigma) & {\rm if} & \sigma -r \leq t \leq \sigma.
\end{array}
\right.
\]
Then the following assertions hold for all $\sigma \in [0,T]$ for all $\phi \in C^0([-r,0] , \R^n)$ and for all $h \in C^0([0,T), \R^n)$
\begin{enumerate}
\item[(xiii)] $V(\cdot, \sigma, \phi, h)_{\vert_{[\sigma, T]}} \in PC^1([\sigma, T], \R^n)$ and $V(\cdot, \sigma, \phi, h) \in C^0([\sigma -r, T], \R^n)$
\item[(xiv)] $V(\cdot, \sigma, \phi, h)$ is the (unique) solution of \eqref{eq36} on $[\sigma, T]$.
\end{enumerate}
\end{theorem}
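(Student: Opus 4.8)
The plan is to reduce the statement to Theorem~\ref{th32} by cutting $[0,T]$ at the finitely many discontinuities of $L$ and gluing the continuous-time resolvents obtained on each piece, exactly along the lines of the splitting of \eqref{eq35} already described above. Write $N_L = \{0 = t_0 < t_1 < \dots < t_p < t_{p+1} = T\}$ and let $L_k$ be the restriction of $L$ to $[t_k,t_{k+1}]$; since $L$ is piecewise continuous with jumps confined to $N_L$, each $L_k$ extends to an element of $C^0([t_k,t_{k+1}], {\mathfrak L}(C^0([-r,0],\R^n),\R^n))$ by means of its one-sided limits at the endpoints. Thus Theorem~\ref{th32} applies on every $[t_k,t_{k+1}]$ and furnishes local data $X_k$, $Z_k$, $U_k$, $V_k$, together with the local $\eta_k$ already used in Proposition~\ref{prop33}.

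First I would construct $X$. Fix $s \in [0,T]$, set $X(t,s) := I$ for $t \le s$, and define $X(\cdot,s)$ on $[s,T]$ by the method of steps: on the piece $[t_m,t_{m+1}]$ containing $s$ I use the local resolvent from Theorem~\ref{th32}, and at each junction $t_k$ I restart the local problem taking as new history the segment produced by the previous piece, precisely the matching $\phi_k(\theta) := z^k(t_k+\theta)$ of the splitting of \eqref{eq35}. Because this matching is through the \emph{continuous} history segment, the concatenated $X(\cdot,s)$ is continuous across each $t_k$, so properties (i)--(iii) follow from the corresponding bounds and the absolute continuity of the finitely many continuous-time pieces; the one-sided derivatives in (iv) exist on each closed piece by Theorem~\ref{th32}, and (vi) holds since a finite union of at-most-countable sets is at-most-countable. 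The only genuine change is in the jump relation (v): at a point $t \in N_L$ the coefficient $\eta(t,\cdot)$ itself jumps, so the formula can only be asserted for $t \in [0,T]\setminus N_L$, which is exactly the restriction stated. Property (vii), the $BV \cap C^0_L$ regularity of $X(t,\cdot)$, is obtained by summing the variation estimates of Theorem~\ref{th32} over the finitely many pieces.

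Next I would treat $Z$, $U$, $V$, which are defined by the \emph{same} integral formulas as in Theorem~\ref{th32}, now fed with the $X$ just built and with the $\eta$ of Proposition~\ref{prop33}. Since $\eta$ is Riemann integrable (Proposition~\ref{prop33}(vi)) and $X$ enjoys (i)--(iv), the continuity (viii) and the existence of one-sided derivatives (ix) carry over verbatim, while the jump identity (x) is again restricted to $t \in [0,T]\setminus N_L$ for the same reason as in (v). For (xi)--(xiv) I would argue that $U(\cdot,\sigma,\phi)$ and $V(\cdot,\sigma,\phi,h)$, being piecewise assembled from functions that solve \eqref{eq32}/\eqref{eq33} on each $[t_k,t_{k+1}]$ by Theorem~\ref{th32}(xii),(xiv), are continuous on $[\sigma-r,T]$ and of class $C^1$ on each closed piece, hence lie in $PC^1([\sigma,T],\R^n)$ and solve \eqref{eq35}/\eqref{eq36}; uniqueness propagates from the subinterval uniqueness of Theorem~\ref{th32}.

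The delicate point is not the bookkeeping of regularity but the verification that the single kernel $X(t,\alpha)$ still acts as the fundamental solution in the variation-of-constants formula of (xiv) when $\alpha$ and $t$ lie in different pieces, i.e. when the propagation from $\alpha$ to $t$ crosses one or more jumps of $L$. For a functional differential equation the resolvent is not a pointwise propagator but transports the whole history segment, so I must check that the concatenation of local resolvents reproduces the global cocycle, namely that feeding $X(\cdot,\alpha)$ into the restart-with-history procedure at each junction agrees with directly solving \eqref{eq36}. Concretely, this amounts to differentiating $\int_{\sigma}^t X(t,\alpha) h(\alpha)\, d\alpha$ under the integral sign and applying a Fubini-type exchange with $d_2\eta$, using the one-sided derivatives of $X$ at the points of $N_L$; this is what makes that integral a genuine solution across the discontinuities, and it is the step I expect to require the most care. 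Everything else is a finite, piece-by-piece transcription of the continuous-time theorem.
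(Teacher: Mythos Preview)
Your approach is viable but differs from the paper's. The paper does \emph{not} glue local resolvents; it simply asserts that the entire construction of $X$ carried out in \cite{BK1} (Theorem~\ref{th32}) goes through verbatim once the hypothesis $[t \mapsto \eta(t,\cdot)] \in C^0([0,T], NBV)$ is weakened to $[t \mapsto \eta(t,\cdot)] \in PC^0_{N_L}([0,T], NBV)$. Concretely, in \cite{BK1} one sets $k(\alpha,s) := \eta(\alpha,s-\alpha)$, solves the Volterra equation $R(t,s) = k(t,s) - \int_s^t R(t,\alpha) k(\alpha,s)\, d\alpha$, and defines $X(t,s) := I - \int_s^t R(\alpha,s)\, d\alpha$ globally on $[0,T]\times[0,T]$ (cf.\ the proof of Proposition~\ref{prop35}). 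That argument only uses integrability and one-sided regularity of $k$, which survive when $\eta$ is merely piecewise continuous in $t$; the finitely many jump points of $L$ simply contribute the exclusion of $N_L$ in (v) and (x).

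The payoff of the paper's route is that it produces $X$ in one stroke as the solution of a single integral equation, so the global properties in the second variable --- (vii) and, crucially, the adjoint identity of Proposition~\ref{prop35} used later in the Pontryagin proof --- are immediate. Your splitting/gluing strategy instead manufactures $X(\cdot,s)$ column by column from local data; it is correct, but the ``delicate point'' you flag (that the concatenated $X$ still satisfies the variation-of-constants formula across junctions, i.e.\ a cocycle-type identity for the history propagation) is genuine extra work, and you would also need a separate argument to recover the global Volterra representation of $X$ needed for Proposition~\ref{prop35}. In short: your plan proves the theorem, but the paper's is shorter and better adapted to the downstream use of $X$.
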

\subsection{Adjoint equation.}
\begin{proposition}\label{prop35}
Let $X$ be provided by Theorem \ref{th34} and $\eta$ be provided by Proposition \ref{prop31}. We define $Y : [0,T] \times [0,T] \rightarrow \M_n(\R)$ by setting $Y(s,t) := X(t,s)$. Then when $s \leq t$, the following equation is satisfied: $Y(s,t) = I - \int_s^t Y(\alpha, t) \eta(\alpha, s - \alpha) d\alpha$.
\end{proposition}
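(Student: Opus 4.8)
The plan is to fix the final time $t$ and to regard $s \mapsto X(t,s)$ on $[0,t]$ as the unknown of a linear backward Volterra equation, namely the stated identity $X(t,s) = I - \int_s^t X(t,\alpha)\,\eta(\alpha,s-\alpha)\,d\alpha$. Both members agree at $s=t$: indeed $X(t,t)=I$ by Theorem~\ref{th34}(ii), and the integral over $[t,t]$ vanishes. Since, by Theorem~\ref{th34}(vii), $X(t,\cdot)$ lies in $BV([0,T],\M_n(\R)) \cap C^0_L([0,T],\M_n(\R))$, and the right-hand side defines (through the regularity of $\eta$ from Proposition~\ref{prop33}) an absolutely continuous function of $s$, it suffices to check that the two sides obey the same equation in the variable $s$ and then to invoke uniqueness (a Gronwall argument) for backward Volterra equations with an $L^\infty$ kernel.

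To produce that equation I would start from the forward description of the resolvent furnished by Theorem~\ref{th34}. Using that $U(\cdot,\sigma,\phi)=X(\cdot,\sigma)\phi(0)+Z(\cdot,\sigma,\phi)$ is the solution of \eqref{eq35}, the representation (iv) of $L$ by $\eta$ in Proposition~\ref{prop33}, and the convention $X\equiv I$ below the diagonal (Theorem~\ref{th34}(ii)), one obtains for almost every $\sigma$ a differential relation of the shape $\partial_\sigma X(t,\sigma) = (\text{a Riemann--Stieltjes expression in } d_\theta\eta(\sigma,\theta))$, the formal adjoint of the forward equation; the below-diagonal normalization is exactly what manufactures the term carried by $\eta(\sigma,\sigma-\cdot)$. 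Integrating this relation from $s$ to $t$ and using $X(t,t)=I$ gives a backward integral equation in which $\eta$ still enters through an integral $\int_{-r}^0 (\cdot)\,d_\theta\eta(\sigma,\theta)$.

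The decisive step is then to convert this Stieltjes form into the announced absolutely continuous kernel. I would integrate by parts in $\theta$ — the normalization $\eta(\sigma,-r)=0$ from Proposition~\ref{prop33}(i) kills the boundary contribution at $\theta=-r$, while the contribution at $\theta=0$ assembles a term of the form $\int_s^t X(t,\sigma)\eta(\sigma,0)\,d\sigma$ — and then apply Fubini's theorem after the change of variable $\alpha=\sigma-\theta$. The outer integration in $d\sigma$ smooths the (possibly atomic) measure $d_\theta\eta(\sigma,\cdot)$ into an absolutely continuous kernel in $\alpha$, and collecting the terms is meant to reproduce exactly $-\int_s^t X(t,\alpha)\eta(\alpha,s-\alpha)\,d\alpha$. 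I expect this Stieltjes Fubini/integration-by-parts to be the main obstacle: it must be justified with only bounded-variation regularity of $\theta \mapsto \eta(\sigma,\theta)$, with a kernel depending on the outer variable $\sigma$, and it demands careful bookkeeping of left-continuity, of the endpoints $\theta=-r$ and $\theta=0$, and of the diagonal $s=t$.

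Finally I would treat the piecewise-continuous framework as in the passage preceding Theorem~\ref{th34}: cut $[0,T]$ at the points $t_0<\dots<t_{p+1}$ of $N_L$, establish the identity on each subinterval where $L$ is continuous (so that Theorem~\ref{th34}(v) and Proposition~\ref{prop33}(iii) apply without exceptional points), and glue. Because both sides are continuous in $s$ and the finitely many breakpoints carry no mass for the Lebesgue integral $\int_s^t(\cdot)\,d\alpha$, the identity persists globally for $s\le t$, and the Gronwall uniqueness invoked in the first paragraph closes the argument.
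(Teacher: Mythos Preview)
Your route is genuinely different from the paper's and considerably more laborious. The paper does not differentiate $X(t,\cdot)$ in $s$, does not integrate by parts in the Stieltjes measure $d_\theta\eta$, and does not appeal to Gronwall uniqueness. Instead it uses a piece of structure from \cite{BK1} that you never invoke: the resolvent kernel $R$ satisfying $R(t,s)=k(t,s)-\int_s^t R(t,\alpha)k(\alpha,s)\,d\alpha$ with $k(\alpha,s):=\eta(\alpha,s-\alpha)$, together with the explicit representation $X(t,s)=I-\int_s^t R(\alpha,s)\,d\alpha$. With these two formulas in hand the proof is a five-line computation: substitute $X(t,\alpha)=I-\int_\alpha^t R(\beta,\alpha)\,d\beta$ into $I-\int_s^t X(t,\alpha)k(\alpha,s)\,d\alpha$, apply Fubini to the double integral $\int_s^t\!\int_\alpha^t R(\beta,\alpha)k(\alpha,s)\,d\beta\,d\alpha$, recognize the resolvent equation for $R$, and recover $I-\int_s^t R(\beta,s)\,d\beta=X(t,s)$.

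Your plan, by contrast, tries to manufacture the backward equation from the forward one. The step you flag as ``decisive'' --- extracting a differential relation for $\partial_\sigma X(t,\sigma)$ from the fact that $U(\cdot,\sigma,\phi)$ solves \eqref{eq35} --- is not supplied by Theorem~\ref{th34}: that theorem gives $X(\cdot,s)\in AC$ in the first variable and only $BV\cap C^0_L$ in the second, so you would first have to prove absolute continuity of $s\mapsto X(t,s)$ (which the paper never asserts) before differentiating. The subsequent Stieltjes integration by parts and Fubini with a kernel $\eta(\sigma,\cdot)$ that depends on the outer variable are, as you say, delicate, and you do not carry them out. None of this is needed once one remembers that $X$ is \emph{built} from $R$ in \cite{BK1}; the adjoint identity then drops out algebraically with ordinary (Riemann) Fubini and no regularity beyond boundedness. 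So your proposal is a plausible strategy with real gaps, whereas the paper's argument is a short closed computation that avoids every obstacle you anticipate.
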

\begin{proof}
We set $k(\alpha,s) := \eta(\alpha, s- \alpha)$. After Theorem 5.4 in \cite{BK1}, there exists $R$ which satisfies $R(t,s) = k(t,s) - \int_s^t R(t,\alpha) k(\alpha,s) d \alpha$. Ever after \cite{BK1} we have $X(t,s) := I - \int_s^t R(\alpha,s) d \alpha$. Then we calculate
\[
\begin{array}{cl}
\null & I - \int_s^t Y(\alpha,t) k (\alpha,s) d \alpha = I - \int_s^t X(t,\alpha) k (\alpha,s) d \alpha \\
= &  I - \int_s^t ( I - \int_{\alpha}^t R(\beta,\alpha) d \beta ) k (\alpha,s) d \alpha \\
= & I - \int_s^t k(\alpha,s) d \alpha +  \int_s^t (\int_{\alpha}^t R(\beta,\alpha) k (\alpha,s) d \beta ) d \alpha \\
= &  I - \int_s^t k(\beta,s) d \beta + \int_s^t ( \int_s^{\beta} R(\beta,\alpha) k (\alpha,s) d \alpha ) d \beta\\
= & I - \int_s^t [ k(\beta,s) -  \int_s^t  \int_s^{\beta} R(\beta,\alpha) k (\alpha,s) d \alpha] d \beta\\
= & I - \int_s^t R(\beta,s) d \beta = X(t,s) = Y(s,t).
\end{array}
\]
\end{proof}
The integral equation which is present into the previous statement is called the adjoint equation of \eqref{eq35}.
\section{Pontryagin principle}
First we give the qualification condition of Michel \cite{Mi}, where $(\overline{x}, \overline{u})$ is an admissible process of $({\mathfrak M})$.
\[
{\rm (QC)}
\left\{
\begin{array}{rl}
\null & \forall (c_j)_{0 \leq j \leq ni + n_e} \in \R^{1 + p+ q}\\
{\rm if} & \forall j = 0,...,n_i, \; c_j \geq 0\\
\null & \forall j= 1,...,n_i, \; c_j g^j(\overline{x}(T))= 0\\
\null & \sum_{j=0}^{n_i + n_e} c_j Dg^j(\overline{x}(T)) = 0\\
{\rm then} & \forall j = 0,...,n_i + n_e, \; c_j = 0.
\end{array}
\right.
\]
The main result of the paper is the following statement of a Pontryagin principle.
\begin{theorem}\label{th41}
Under the assumptions \eqref{eq11} and \eqref{eq12}, if $(\overline{x}, \overline{u})$ is a solution of the Mayer problem $({\mathfrak M})$ then there exist $\lambda_0$,..., $\lambda_{n_i + n_e} \in \R$ and there exists $p \in BV([0, T], \R^{n*}) \cap C^0_L([0,T], \R^{n*})$ which satisfy the following conditions.
\begin{enumerate}
\item[(NN)] $(\lambda_0,..., \lambda_{n_i + n_e}) \neq (0,...,0)$
\item[(Si)] $\forall j \in \{0,...,n_i \}$, $\lambda_j \geq 0$
\item[(Sl)] $\forall j \in \{0,...,n_i \}$, $\lambda_j g^j(\overline{x}(T)) = 0$
\item[(AE)] $\forall t \in [0,T]$, $\frac{d}{dt} (p(t) + \int_t^{\min\{t+r, T \}} p(\alpha) \eta(\alpha, t - \alpha) d \alpha) = 0$
\item[(T)] $p(T) = \sum_{j=0}^{n_i + n_e} \lambda_j Dg^j(\overline{x}(T))$
\item[(MP)] $\forall t \in [0,T]$, $\forall u \in U$, \hskip2mm $p(t) f(t, \overline{x}_t, \overline{u}(t)) \geq p(t)f(t, \overline{x}_t, u)$
\end{enumerate}
If moreover (QC) is fulfilled, we obtain the following additional conclusions.
\begin{enumerate}
\item[(A2)] There exists $\epsilon > 0$ such that $p(t) \neq 0$ for all $t \in (T- \epsilon, T]$.
\item[(A2)] $p(T) \neq 0$ and, for all $t \in [0,T]$, $p_{\mid_{[t, \min \{t +r, T \}]}} \neq 0$.
\end{enumerate}
\end{theorem}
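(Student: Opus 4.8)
The plan is to follow Michel's method: perturb the optimal control by needle variations, use the resolvent of the linearized delay equation (Theorem \ref{th34}) to compute the induced first-order variation of the terminal state, extract the multipliers from a finite-dimensional separation, and finally identify the costate $p$ with a backward-resolvent expression to read off (AE), (T) and (MP). Concretely, I would fix a continuity point $\tau \in [0,T] \setminus (N_{\overline{u}} \cup F)$ and a value $v \in U$, and for small $\epsilon > 0$ let $u_\epsilon$ coincide with $\overline{u}$ off $[\tau, \tau+\epsilon]$ and equal $v$ on $[\tau, \tau+\epsilon]$, with associated state $x_\epsilon$. By causality $x_\epsilon = \overline{x}$ on $[-r,\tau]$, and a direct estimate on $[\tau,\tau+\epsilon]$ gives $x_\epsilon(\tau+\epsilon) - \overline{x}(\tau+\epsilon) = \epsilon\, w + o(\epsilon)$, where $w := f(\tau, \overline{x}_\tau, v) - f(\tau, \overline{x}_\tau, \overline{u}(\tau))$. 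The point for which Section 3 was designed is that this injected displacement then propagates through the linear delay equation with vector field $L(t) := D_2 f(t, \overline{x}_t, \overline{u}(t))$, which satisfies \eqref{eq34} thanks to \eqref{eq11}; hence $y(t) := \lim_{\epsilon \to 0^+} \epsilon^{-1}(x_\epsilon(t) - \overline{x}(t)) = X(t,\tau)\, w$ for $t \ge \tau$, with $X$ the resolvent of Theorem \ref{th34}. In particular the variation of the terminal state is $X(T,\tau)\, w$.

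Next I would form, in $\R^{1+n_i+n_e}$, the cone of first-order variations of $(g^0,\dots,g^{n_i+n_e})$ at $\overline{x}(T)$ generated by such needles; by superposing packets of needles at distinct times with nonnegative weights this cone is convex. Optimality of $(\overline{x},\overline{u})$ prevents it from meeting the open convex cone of directions that simultaneously increase $g^0$ and preserve the active inequality and the equality constraints, and a finite-dimensional separation theorem (this is the heart of Michel's argument) produces $\lambda_0,\dots,\lambda_{n_i+n_e}$, not all zero, with $\lambda_j \ge 0$ for $0 \le j \le n_i$ (yielding (NN) and (Si)), with $\lambda_j g^j(\overline{x}(T)) = 0$ (yielding (Sl)), and such that
\[
\sum_{j} \lambda_j\, Dg^j(\overline{x}(T)) \cdot X(T,\tau)\, w \le 0
\]
for every admissible $\tau, v$.

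I would then set $p(t) := \bigl(\sum_j \lambda_j Dg^j(\overline{x}(T))\bigr) X(T,t)$, so that (T) holds by construction and, by Theorem \ref{th34}(vii), $p \in BV([0,T],\R^{n*}) \cap C^0_L([0,T],\R^{n*})$. Proposition \ref{prop35} with $t=T$ gives $X(T,s) = I - \int_s^T X(T,\alpha)\,\eta(\alpha, s-\alpha)\,d\alpha$, and since $\eta(\alpha,\cdot)$ is supported in $[-r,0]$ the integrand vanishes for $\alpha > s+r$, whence
\[
p(s) + \int_s^{\min\{s+r,T\}} p(\alpha)\,\eta(\alpha, s-\alpha)\,d\alpha = p(T)
\]
is constant, which is exactly (AE). Finally the separation inequality reads $p(\tau)\, w \le 0$, i.e. $p(\tau) f(\tau,\overline{x}_\tau,\overline{u}(\tau)) \ge p(\tau) f(\tau,\overline{x}_\tau,v)$ at every continuity point $\tau$; by left-continuity of $p$ and continuity of $f$ this extends to all $t \in [0,T]$, giving (MP).

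Under (QC), (NN) together with the sign and slackness conditions forbids $\sum_j \lambda_j Dg^j(\overline{x}(T)) = 0$, so $p(T) \neq 0$; left-continuity then forces $p(t) \neq 0$ on some $(T-\epsilon, T]$, and if $p$ vanished identically on some $[t,\min\{t+r,T\}]$ then evaluating the constant of (AE) at $s = t$ would give $p(T) = 0$, a contradiction, which establishes the two final assertions. I expect the main obstacle to be the rigorous justification of the needle-variation limit in the $PC^1$/$PC^0$ setting with delay: controlling the boundary layer near $\tau$, obtaining the $o(\epsilon)$ estimates uniformly on the segments $x_{\epsilon,t}$, and above all securing convexity of the attainable cone by superposition so that the finite-dimensional separation applies. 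In particular the identification $y(t) = X(t,\tau)\, w$ must be read off carefully from the representation of $U$ and $V$ in Theorem \ref{th34}, since the injected perturbation affects only the present value $\overline{x}(\tau)$ and not, at first order, the history, so that the $Z$-contribution drops out; this delicate passage, rather than the algebra leading to (AE) and (T), is where the real work lies.
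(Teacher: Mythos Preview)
Your outline is correct and follows the paper's route (Michel's needle variations plus the resolvent of Theorem \ref{th34}); your derivation of (AE) from Proposition \ref{prop35}, of (T), and of the two (QC)-consequences is exactly the paper's. Two execution differences are worth noting. First, to compute the terminal variation you inject the displacement $\epsilon w$ at $\tau$ and propagate through the homogeneous linearized equation, arguing that the $Z$-term drops because the history is unperturbed at first order; the paper instead linearizes on all of $[0,T]$ with zero initial data and a nonhomogeneous forcing $\Delta(t,S,a)=f(t,\overline{x}_t,u(t,S,a))-f(t,\overline{x}_t,\overline{u}(t))$ supported on the needle intervals, so that the $V$-formula of Theorem \ref{th34} gives $z(T,S,a)=\int_0^T X(T,\xi)\Delta(\xi,S,a)\,d\xi$ directly and the $Z$-contribution vanishes simply because $\phi=0$ --- this bypasses your boundary-layer analysis entirely. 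Second, to obtain the multipliers you separate the full variation cone from a ``bad'' cone in $\R^{1+n_i+n_e}$; the paper instead applies a Fritz--John multiplier rule (Lemma \ref{lem56}) to the finite-dimensional problem in $a\in\R^N_+$ for each fixed packet $S$, getting $\Lambda(S)$ on the unit sphere, and then uses a compactness/finite-intersection argument (Lemma \ref{lem58}) to extract a single $\Lambda$ valid for all $(t,v)$. Your single-separation approach is more direct but you should be careful that the cone you call ``open'' is not open along the equality-constraint directions; the paper's two-step route sidesteps this, and since it allows arbitrary $t_i\in[0,T)$ (taking $\overline{u}$ right-continuous without loss of generality) it also avoids your final density/continuity extension of (MP) from continuity points to all $t$.
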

\vskip3mm
In this statement, (NN) is a condition of non nullity of the multipliers, (Si) is a condition on the signs of multipliers, (Sl) is a condition of slackness on the final inequality constraints, (AE) is called the adjoint equation, (T) is the transversality condition, and (MP) is the maximum principle. The mapping $\eta$ which is present into (AE) comes from Proposition \ref{prop33} with $L(t) = D_2f(t, \overline{x}_t, \overline{u}(t))$.
\begin{remark}\label{rem42}
The conclusion (AE) is equivalent to say that the function $[t \mapsto p(t) + \int_t^{\min\{ t+r, T \}} p(\xi) \eta(\xi, \xi -t) d \xi]$ is constant on $[0,T]$. Since $\int_T^{\min\{ T+r, T \}} p(\xi) \eta(\xi, \xi -t) d \xi = 0$, using (TC), we obtain that (AE) is equivalent to
$$\forall t \in [0,T],\; \; p(t) + \int_t^{\min\{ t+r, T \}} p(\xi) \eta(\xi, \xi -t) d \xi = p(T) = \sum_{j=0}^{n_i + n_e} \lambda_j Dg^j(\overline{x}(T)).$$
\end{remark}
\section{proof of the Pontryagin principle}
Our proof follows the proof given by Michel in \cite{Mi}; we provide the useful changes to adapt it to the setting of systems governed by a functional differential equation. Note that we can choose $\overline{u}$ as a right-continuous function without to lost generality. We arbitrarily fix $S = \{ (t_i,v_i) : i \in \{ 1,...,N \} \}$ where $0 \leq t_1 \leq ...\leq t_N < T$ and the $v_i \in U$; we denote by ${\mathfrak S}$ the set of all these $S$. When $a = (a_1,...,a_N) \in \R^N_+$, we define $J(i) := \{ j \in \{1,...,i \} : t_j = t_i \}$ and we set $b_i := 0$ when $J(i) = \emptyset$ and $b_i := \sum_{j \in J(i)}a_j$ when $J(i) \neq \emptyset$. After that we define the subintervals $I_i := [t_i + b_i, t_i + b_i + a_i)$ and $u(t,S,a) := v_i$ when $t \in I_i$ and the control $u(t,S,a) := \overline{u}(t)$ when $t \in [0,T] \setminus \cup_{1 \leq i \leq N} I_i$. Taking $a$ small enough we have $I_i \subset [0,T]$ and the $I_i$ are pairwise disjoint. We denote by $x(\cdot,S,a)$ the unique solution on $[0,T]$ of the following problem of Cauchy
$$\forall t \in [0,T] \setminus N_{u(\cdot,S,a)}, \frac{\partial x(t,S,a)}{\partial t} = f(t, x(\cdot,S,a)_t, u(t,S,a)), \: \; x(\cdot,S,a)_0 = \phi.$$
\begin{lemma}\label{lem51}
We fix $S \in {\mathfrak S}$ and $a \in \R^N_+$. We denote by $z(\cdot,S,a)$ the solution on $[0,T]$ of the following linear problem
\[
\left\{
\begin{array}{l}
\forall t \in [0,T] \setminus N_{u(.,S,a)},\\
 \frac{\partial  z(t,S,a)}{\partial t} = D_2f(t, \overline{x}_t, \overline{u}(t))z(\cdot,S,a)_t + [f(t, \overline{x}_t, u(t,S,a)) -f(t, \overline{x}_t,\overline{u}(t))]\\
 z(\cdot,S,a)_0 = 0.
\end{array}
\right.
\]
Then the partial differential $D_a z(T,S,0)$ exists and the following equality holds
$$D_a z(T,S,0)a = \sum_{i=1}^N a_i X(T,t_i)[f(t_i, \overline{x}_{t_i}, v_i) - f(t_i, \overline{x}_{t_i},\overline{u}(t_i))].$$
where $X(T,t_i)$ comes from Theorem \ref{th34} when $L(t) =  D_2f(t, \overline{x}_t, \overline{u}(t))$.
\end{lemma}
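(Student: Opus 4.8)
The plan is to recognize the defining equation for $z(\cdot,S,a)$ as an instance of the linear nonhomogeneous problem \eqref{eq36} and then to apply the explicit variation-of-constants representation furnished by Theorem \ref{th34}. Set $L(t) := D_2f(t,\overline{x}_t,\overline{u}(t))$ and $h_a(t) := f(t,\overline{x}_t,u(t,S,a)) - f(t,\overline{x}_t,\overline{u}(t))$. Since $D_2f$ is continuous by \eqref{eq11}, since $t \mapsto \overline{x}_t$ is continuous, and since $\overline{u} \in PC^0([0,T],U)$, the map $L$ satisfies \eqref{eq34}; likewise $h_a \in PC^0([0,T],\R^n)$. Thus $z(\cdot,S,a)$ is the unique solution of \eqref{eq36} with $\sigma = 0$ and $\phi = 0$, and Theorem \ref{th34}(xiv) gives $z(t,S,a) = V(t,0,0,h_a)$. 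Because $\phi = 0$ forces $U(t,0,0) = 0$ and $Z(t,0,0) = 0$, we obtain the clean formula $z(T,S,a) = \int_0^T X(T,\alpha)\,h_a(\alpha)\,d\alpha$, where $X$ is the resolvent associated with $L$ as in the statement.

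Next I would make the spike structure explicit. By construction $h_a$ vanishes off $\bigcup_i I_i$ and equals $f(\alpha,\overline{x}_\alpha,v_i) - f(\alpha,\overline{x}_\alpha,\overline{u}(\alpha))$ on $I_i = [t_i + b_i, t_i + b_i + a_i)$, so that
\[
z(T,S,a) = \sum_{i=1}^N \int_{t_i+b_i}^{t_i+b_i+a_i} \psi_i(\alpha)\,d\alpha, \qquad \psi_i(\alpha) := X(T,\alpha)\bigl[f(\alpha,\overline{x}_\alpha,v_i) - f(\alpha,\overline{x}_\alpha,\overline{u}(\alpha))\bigr].
\]
Here each $b_i$ is a nonnegative linear combination of the $a_j$, hence $b_i = O(\lvert a\rvert)$ and $b_i \to 0$ as $a \to 0$; consequently every interval $I_i$ shrinks down to the single point $t_i$ while keeping length $a_i$.

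The final step is to differentiate at $a = 0$. Each summand is an integral with endpoints depending affinely on $a$, so $\int_{t_i+b_i}^{t_i+b_i+a_i}\psi_i(\alpha)\,d\alpha = a_i\,\psi_i(t_i+) + o(a_i)$, the right-hand limit being the relevant one since $I_i$ lies to the right of $t_i$. Summing over the finitely many indices yields $z(T,S,a) = \sum_i a_i\,\psi_i(t_i+) + o(\lvert a\rvert)$, which both establishes the existence of $D_a z(T,S,0)$ and identifies it. It remains to evaluate $\psi_i(t_i+)$: the right-continuity of $\overline{u}$ (which we may assume) gives $\overline{u}(\alpha) \to \overline{u}(t_i)$, while the continuity of $f$ and of $t \mapsto \overline{x}_t$ give $f(\alpha,\overline{x}_\alpha,v_i) \to f(t_i,\overline{x}_{t_i},v_i)$ and $f(\alpha,\overline{x}_\alpha,\overline{u}(\alpha)) \to f(t_i,\overline{x}_{t_i},\overline{u}(t_i))$; together with the value of $X(T,\cdot)$ at $t_i$ this produces the announced expression $\sum_i a_i X(T,t_i)[f(t_i,\overline{x}_{t_i},v_i) - f(t_i,\overline{x}_{t_i},\overline{u}(t_i))]$.

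I expect the delicate point to be the bookkeeping of one-sided limits forced by the low regularity of the data. The integrand $\psi_i$ is only piecewise continuous, so one must check that it genuinely admits a right-hand limit at $t_i$: this is where the right-continuity normalization of $\overline{u}$ and the fact that $I_i$ approaches $t_i$ strictly from the right are used. The most subtle ingredient is the behaviour of $X(T,\cdot)$, which by Theorem \ref{th34}(vii) is only left-continuous and of bounded variation; the computation naturally returns the right-hand limit $X(T,t_i+)$, and one has to argue that this coincides with $X(T,t_i)$. Since $X(T,\cdot)$ has at most countably many discontinuities, this identification holds at all but countably many spike times, which is enough when one later ranges over the family $\mathfrak{S}$. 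Finally, upgrading the directional estimates into a bona fide (one-sided) differential requires the $o(\lvert a\rvert)$ remainder to be uniform across the spikes, which follows from the finiteness of $N$ together with the fact that each $I_i$ collapses to $t_i$.
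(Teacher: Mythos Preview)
Your approach is essentially the paper's: apply the variation-of-constants formula from Theorem \ref{th34} with $\sigma=0$, $\phi=0$ to obtain $z(T,S,a)=\int_0^T X(T,\alpha)h_a(\alpha)\,d\alpha$, use the support of $h_a$ to reduce to $\sum_i\int_{I_i}\psi_i$, and let $a\to 0$. The paper carries out the last step more explicitly than you do, writing the remainder as $\varrho_i(S,a)=\int_0^1\bigl[\psi_i(t_i+b_i+\theta a_i)-\psi_i(t_i)\bigr]\,d\theta$ after the substitution $\alpha=t_i+b_i+\theta a_i$ and then invoking dominated convergence; this is precisely the rigorous form of your ``$\int_{I_i}\psi_i=a_i\psi_i(t_i+)+o(a_i)$''.

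One point worth recording: the issue you raise about $X(T,t_i)$ versus $X(T,t_i+)$ is real. The paper simply asserts that $X(T,\cdot)$ is right-continuous and cites item (vi) of Theorem \ref{th34}, but (vi) concerns differentiability of $X(\cdot,s)$ in the first variable, while (vii) only guarantees left-continuity of $X(T,\cdot)$; so the paper does not actually resolve the subtlety you identified. Your proposed workaround (the discontinuity set of $X(T,\cdot)$ is at most countable, hence avoidable when one later varies $S$ over $\mathfrak S$) does not prove the lemma for \emph{every} $S$ as stated, but it suffices for the use made of the lemma downstream.
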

\begin{proof}
We set $\Delta(t,S,a) := [f(t, \overline{x}_t, u(t,S,a)) -f(t, \overline{x}_t,\overline{u}(t))]$ and using Theorem \ref{th34} we obtain $z(T,S,a) = V(T,0,0,\Delta(\cdot,S,a))$= $X(T,0)0 + \int_0^T X(T,\xi) (\int_{-r}^0 d_2\eta(\xi,\theta) 0(\xi + \theta) d \xi + \int_0^T X(T,\xi) \Delta(\xi, S,a) d \xi$ = $\int_0^T X(T,\xi) \Delta(\xi, S,a) d \xi$. Since $\Delta(\xi,S,a) = 0$ when $\xi \in [0,T] \setminus \cup_{1 \leq i \leq N} I_i$, we obtain 
$$z(T,S,a) = \sum_{i=1}^N \int_{t_i + b_i}^{t_i + b_i + a_i} X(T,\xi)  [f(t_i, \overline{x}_{t_i}, v_i) - f(t_i, \overline{x}_{t_i},\overline{u}(t_i))] d\xi.$$
Note that  
\[
\begin{array}{cl}
\null & 
a_i  X(T,t_i) [f(t_i, \overline{x}_{t_i}, v_i) - f(t_i, \overline{x}_{t_i},\overline{u}(t_i))]\\
 =& \int_{t_i + b_i}^{t_i + b_i + a_i}X(T,t_i)[f(t_i, \overline{x}_{t_i}, v_i) - f(t_i, \overline{x}_{t_i},\overline{u}(t_i))]d \xi.
\end{array}
\]
Since $u(t,S,0) = \overline{u}(t)$ we have $\Delta(t,S,0) = 0$ and then $z(T,S,0) = 0$. For all $i \in \{1,...,N \}$ we define $\rho_i := {\mathfrak S} \times \R^N_+ \rightarrow \R^n$ by setting 
$$\varrho_i(S,a) := \frac{1}{a_i} \int_{t_i + b_i}^{t_i + b_i + a_i} [X(t,s) \Delta(s,S,a) - X(T,t_i)( f(t_i, \overline{x}_{t_i}, v_i) - f(t_i, \overline{x}_{t_i},\overline{u}(t_i)))] ds$$
when $a_i \neq 0$ and $\varrho_i(S,a) := 0$ when $a_i = 0$.
Then the following formula holds.
\begin{equation}\label{eq51}
z(T,S,a) = z(T,S,0) + \sum_{i=1}^N a_i X(T,t_i) \Delta(t_i,S,a) + \sum_{i=1}^N a_i \varrho_i(S,a).
\end{equation}
Now to prove the result, it sufffices to prove that the following assertion holds.
\begin{equation}\label{eq52}
\forall i \in \{1,...,N \}, \; \; \lim_{a \rightarrow 0} \varrho_i(S,a) = 0.
\end{equation}
In the formula of $\varrho_i$ we do the change of variable $s =t_i + b_i + \theta a_i$ with $\theta \in [0,1]$, setting $\varpi_i(S,a,\theta) := X(T,t_i + b_i+ \theta a_i)\Delta(t_i + b_i+ \theta a_i, S,a) - X(T,t_i)\Delta(t_i,S,a)$, we obtain the following formula.
\begin{equation}\label{eq53}
\varrho_i(S,a) = \int_0^1 \varpi_i(S,a,\theta) d \theta.
\end{equation}
we arbitrarily fix $\theta \in [0,1)$. Note that, for all $i \in \{ 1,..., N \}$, we have $\lim_{a \rightarrow 0}a_i = 0$ and $\lim_{a \rightarrow 0}b_i = 0$. Since $X(T,\cdot)$ is right-continuous ((vi) of Theorem \ref{th34}) we obtain $\lim_{a \rightarrow 0}X(T, t_i + b_i + \theta a_i) = X(T,t_i)$. Note that $\Delta( t_i + b_i + \theta a_i, S, a) = f( t_i + b_i + \theta a_i, \overline{x}_{ t_i + b_i + \theta a_i}, v_i) - f( t_i + b_i + \theta a_i, \overline{x}_{ t_i + b_i + \theta a_i}, \overline{u}( t_i + b_i + \theta a_i))$. Since $f$ is continuous, since $[t \mapsto \overline{x}_t]$ is continuous and since $\overline{u}$ is right-continuous, we obtain $\lim_{a \rightarrow 0}\Delta( t_i + b_i + \theta a_i, S, a)
= f(t_i, \overline{x}_{t_i}, v_i) - f(t_i, \overline{x}_{t_i}, \overline{u}(t_i))$ which implies $\lim_{a \rightarrow 0}(X(T,t_i + b_i+ \theta a_i)\Delta(t_i + b_i+ \theta a_i, S,a) - X(T,t_i)\Delta(t_i,S,a)) = X(T,t_i)[ f(t_i, \overline{x}_{t_i}, v_i) - f(t_i, \overline{x}_{t_i}, \overline{u}(t_i))] - X(T,t_i)[ f(t_i, \overline{x}_{t_i}, v_i) - f(t_i, \overline{x}_{t_i}, \overline{u}(t_i))] = 0$. And so we obtain the following property.
\begin{equation}\label{eq54}
\forall \theta \in [0,1), \; \; \lim_{a \rightarrow 0} \varpi_i(S,a,\theta)  = 0.
\end{equation}
We fix $\delta \in (0, + \infty)$ small enough to have $\{ t_i + b_i+ \theta a_i : \theta \in [0,1], a \in \R^N_+ \cap \overline{B}(0, \delta) , i \in \{ 1,...,N \} \} \subset [0,T]$, and then the closure of this subset is compact. Since $[t \mapsto \overline{x}_t ]$ is continuous, $\{ \overline{x}_t : t \in [0,T] \}$ is compact. Since $\overline{u}$ is piecewise continuous on $[0,T]$, $\overline{u}([0,T])$ is compact, and $\{u(t_i + b_i+ \theta a_i,S,a) : i \in \{1,...,N \}, a \in \R^N_+ \cap \overline{B}(0, \delta), \theta \in [0,1] \}$ is compact. Since $f$ is continuous we obtain that there exists $c \in (0, + \infty)$ such that
\[
\begin{array}{l}
\forall a \in \R^N_+ \cap \overline{B}(0, \delta), \forall \theta \in [0,1], \forall i \in \{ 1,...,N \}, \\
\Vert f(t_i + b_i+ \theta a_i, \overline{x}_{t_i + b_i+ \theta a_i}, u(t_i + b_i+ \theta a_i,S,a))\\
 - f(t_i + b_i+ \theta a_i, \overline{x}_{t_i + b_i+ \theta a_i}, \overline{u}(t_i + b_i+ \theta a_i)) \Vert \leq c.
\end{array}
\]
Since $X$ is bounded on $[0,T] \times [0,T]$, we obtain the existence of $c_1 \in (0, + \infty)$ such that
\[
\forall a \in \R^N_+ \cap \overline{B}(0, \delta), \forall \theta \in [0,1], \forall i \in \{ 1,...,N \}, 
\Vert \varpi_i(S,a, \theta) \Vert \leq c_1.
\]
Since a constant is Lebesgue integrable on $[0,1]$,  we can use the theorem of the dominated convergence of Lebesgue to assert that $\lim_{a \rightarrow 0}\varrho_i(S,a) = \int_0^1 0 d \theta = 0$. Then \eqref{eq52} is proven, and the conclusion of the lemma follows from \eqref{eq51}. 
\end{proof}
\begin{lemma}\label{lem52}
Let $S \in {\mathfrak S}$. There exists $\delta_1 \in (0, + \infty)$ and $c_2 \in (0, + \infty)$ such that, for all $a \in \R^N_+ \cap \overline{B}(0, \delta_1)$,
$\int_0^T \Vert f(t, \overline{x}_t, u(t,S,a)) - f(t, \overline{x}_t, \overline{u}(t)) \Vert dt \leq c_2 \cdot \Vert a \Vert$.
\end{lemma}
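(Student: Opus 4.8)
The plan is to exploit the explicit structure of the needle-variation control $u(\cdot,S,a)$: it differs from $\overline{u}$ only on the union $\bigcup_{i=1}^N I_i$ of the intervals $I_i=[t_i+b_i,t_i+b_i+a_i)$, each of Lebesgue measure $a_i$. First I would fix $\delta_1\in(0,+\infty)$ small enough, exactly as in the construction preceding Lemma~\ref{lem51}, so that for every $a\in\R^N_+\cap\overline{B}(0,\delta_1)$ one has $I_i\subset[0,T]$ for all $i$ and the $I_i$ are pairwise disjoint; this is possible because $t_i<T$ and $b_i,a_i\to 0$ as $a\to 0$. On $[0,T]\setminus\bigcup_i I_i$ the integrand $\Vert f(t,\overline{x}_t,u(t,S,a))-f(t,\overline{x}_t,\overline{u}(t))\Vert$ vanishes, while on $I_i$ it equals $\Vert f(t,\overline{x}_t,v_i)-f(t,\overline{x}_t,\overline{u}(t))\Vert$; hence the integral over $[0,T]$ splits as $\sum_{i=1}^N\int_{I_i}\Vert f(t,\overline{x}_t,v_i)-f(t,\overline{x}_t,\overline{u}(t))\Vert\,dt$.

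The decisive point is a uniform bound on each integrand that does not depend on $a$. For each fixed $i$ the map $t\mapsto f(t,\overline{x}_t,v_i)$ is continuous on the compact interval $[0,T]$, since $[t\mapsto\overline{x}_t]$ is continuous and $f$ is continuous by \eqref{eq11}, and is therefore bounded; likewise $t\mapsto f(t,\overline{x}_t,\overline{u}(t))$ coincides with $\overline{x}'(t)$ off the finite set $F$, so it is piecewise continuous and hence bounded on $[0,T]$. (Equivalently one may argue by compactness as in the proof of Lemma~\ref{lem51}: $\{\overline{x}_t:t\in[0,T]\}$ is compact, the $v_i$ form a finite set, and $\overline{u}([0,T])$ lies in a compact set, so $f$ is bounded on the relevant compact region.) Taking a common bound $c\in(0,+\infty)$ over $i\in\{1,\dots,N\}$ and $t\in[0,T]$ for $\Vert f(t,\overline{x}_t,v_i)-f(t,\overline{x}_t,\overline{u}(t))\Vert$, and using $\vert I_i\vert=a_i$, I would obtain
\[
\int_0^T\Vert f(t,\overline{x}_t,u(t,S,a))-f(t,\overline{x}_t,\overline{u}(t))\Vert\,dt\;\leq\;c\sum_{i=1}^N a_i\;\leq\;c\sqrt{N}\,\Vert a\Vert,
\]
so that the claim holds with $c_2:=c\sqrt{N}$, the last inequality being Cauchy--Schwarz applied to $\sum_i a_i=\langle a,(1,\dots,1)\rangle$.

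Unlike Lemma~\ref{lem51}, there is no genuine obstacle here: the bound $c$ is taken over all of $[0,T]$ and is therefore independent of $a$, so no limiting argument or dominated convergence is needed. The only point requiring a little care is to ensure that the constant does not secretly depend on $a$ through the moving intervals $I_i$; this is avoided precisely by bounding the two (continuous, respectively piecewise continuous) functions of $t$ globally on $[0,T]$ rather than on the $a$-dependent sets $I_i$.
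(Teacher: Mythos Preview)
Your proof is correct and is in fact more economical than the paper's. The paper proceeds by showing that the map $a\mapsto e(T,a):=\int_0^T\Vert\Delta(t,S,a)\Vert\,dt$ is \emph{differentiable} at $0$: it writes $e(T,a)-e(T,0)-\sum_i a_i\Vert\Xi_i\Vert=\sum_i a_i\,\nu_i(S,a)$ with $\Xi_i:=f(t_i,\overline{x}_{t_i},v_i)-f(t_i,\overline{x}_{t_i},\overline{u}(t_i))$, performs the change of variable $s=t_i+b_i+\theta a_i$, and then invokes dominated convergence (exactly as in Lemma~\ref{lem51}) to obtain $\nu_i(S,a)\to 0$; the Lipschitz bound is then read off from the definition of differentiability by choosing $\delta_1$ so that the remainder term is at most $\Vert a\Vert$, yielding $c_2=\Vert D_2e(T,0)\Vert+1$.

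Your route bypasses all of this: you bound $\Vert f(t,\overline{x}_t,v_i)-f(t,\overline{x}_t,\overline{u}(t))\Vert$ uniformly on the whole of $[0,T]$ by a constant $c$ independent of $a$ (via continuity of $t\mapsto f(t,\overline{x}_t,v_i)$ and piecewise continuity of $t\mapsto f(t,\overline{x}_t,\overline{u}(t))$ on the compact $[0,T]$), and then use $\vert I_i\vert=a_i$ directly. This is strictly weaker information than differentiability of $e(T,\cdot)$ at $0$, but since only the inequality $e(T,a)\leq c_2\Vert a\Vert$ is used later (in Lemma~\ref{lem54}), nothing is lost. Your observation that the constant must be taken over all of $[0,T]$, not just over the $a$-dependent intervals $I_i$, is exactly the right point of care.
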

\begin{proof}
Note that the integrand in the formula is $\Vert \Delta (t,S,a) \Vert$. We introduce $e : [0,T] \times  (\R^N_+ \cap \overline{B}(0, \delta)) \rightarrow \R_+$ by setting $e(t,a):= \int_0^t \Vert \Delta (t,S,a) \Vert dt $.\\
 Since $\Delta(T,S,0) = 0$ we have $\sigma (T,0) = 0$. We set $\Xi_i := f(t_i, \overline{x}_{t_i}, v_i) - f(t_i, \overline{x}_{t_i}, \overline{u}(t_i))$. Then we have
\[
\begin{array}{cl}
\null & e(T,a) - e(T,0) - \sum_{i=1}^N a_i \Vert \Xi_i \Vert\\
=& \int_0^T \Vert \Delta (t,S,a) \Vert dt  - 0- \sum_{i=1}^N a_i \Vert \Xi_i \Vert\\
= & \int_{[0,T] \setminus \cap_{i=1}^N I_i} \Vert \Delta (t,S,a) \Vert dt  + \int_{\cap_{i=1}^N I_i} \Vert \Delta (t,S,a) \Vert dt - \sum_{i=1}^N a_i \Vert \Xi_i \Vert\\
=& 0+ \sum_{i=1}^N \int_{I_i} \Vert \Delta (t,S,a) \Vert dt  - \sum_{i=1}^N a_i \Vert \Xi_i \Vert\\
=&  \sum_{i=1}^N \int_{t_i+ b_i}^{t_i + b_+ + a_i}  \Vert \Delta (t,S,a) \Vert dt  - \sum_{i=1}^N  \int_{t_i+ b_i}^{t_i + b_+ + a_i} \Vert \Xi_i \Vert dt\\
=& \sum_{i=1}^N a_i \frac{1}{a_i} \int_{t_i+ b_i}^{t_i + b_+ + a_i}  (\Vert \Delta (t,S,a) \Vert -\Vert \Xi_i \Vert) dt.
\end{array}
\]
And so defining the function $\nu_i : {\mathfrak S} \times (\R^N_+ \cap \overline{B}(0, \delta)) \rightarrow \R_+$ by setting 
\[
\nu_i(S,a) :=
\left\{
\begin{array}{lcl}
\frac{1}{a_i} \int_{t_i+ b_i}^{t_i + b_+ + a_i}  (\Vert \Delta (t,S,a) \Vert -\Vert \Xi_i \Vert) dt& {\rm if} & a_i \neq 0\\
0 & {\rm if} & a_i = 0,
\end{array}
\right.
\]
we obtain, for all $a \in \R^N_+ \cap \overline{B}(0, \delta)$,
\begin{equation}\label{eq55}
e(T,a) = e(T,0) + \sum_{i=1}^N a_i \Vert \Xi_i \Vert + \sum_{i=1}^N a_i \nu_i(S,a).
\end{equation}
Using the change of variable $s = t_i + b_i + \theta a_i$ we obtain $\nu_i(S,a) = \int_0^1 (\Vert \Delta (t_i + b_i + \theta a_i,S,a) \Vert  -\Vert \Xi_i \Vert) d\theta$. In the previous proof we have yet seen that $\lim_{a \rightarrow 0} (\Vert \Delta (t_i + b_i + \theta a_i,S,a) \Vert  -\Vert \Xi_i \Vert) = 0$, and that there exists $c \in (0, + \infty)$ such that, for all $a \in \R^N_+ \cap \overline{B}(0, \delta)$, for all $\theta \in [0,1]$, $\vert \Vert \Delta (t_i + b_i + \theta a_i,S,a) \Vert  -\Vert \Xi_i \Vert  \vert \leq c$. And so we can use the theorem of the dominated convergence of Lebesgue and assert that $\lim_{a \rightarrow 0} \nu_i(S,a) = 0$. And then, using \eqref{eq55} we can say that $e(T,\cdot)$ is differentiable at 0 and that its partial differential is $D_2 e(T,0).a = \sum_{i=1}^N a_i \Vert \Xi_i \Vert$ that implies the existence of a mapping $\mu :  {\mathfrak S} \times (\R^N_+ \cap \overline{B}(0, \delta)) \rightarrow \R$ such that $\lim_{a \rightarrow 0} \mu(a) = 0$ and $e(T,a) - e(T,0) - \sum_{i=1}^N a_i \Vert \Xi_i \Vert = \Vert a \Vert \cdot \mu(a)$. We fix $\delta_1 \in (0 , \delta]$ such that $\Vert a \Vert \leq \delta_1 \Longrightarrow \vert \mu(a) \vert \leq 1$. Then, when $a \in \R^n_+ \cap \overline{B}(0, \delta_1)$ we have $e(T,a) = \vert \sigma(T,a) \vert \leq \Vert D_2 \sigma(T,0) \Vert \cdot \Vert a \Vert + \Vert a \vert$. To conclude it suffices to take $c_2 := \Vert D_2 e(T,0) \Vert + 1$.
\end{proof} 
\begin{lemma}\label{lem53}
Let $\Gamma : [0,T] \times C^0([-r,0], \R^n) \rightarrow \R^n$. Let $0 = \tau_0 < \tau_1 <...<\tau_{\ell} < \tau_{\ell +1} = T$ and $F := \{ \tau_i : i \in \{0,...,{\ell} \} \}$.  We assume that, for all $i \in \{0, ..., {\ell} \}$, $\Gamma$ is continuous on $[\tau_i, \tau_{i+1}] \times C^à([-r,0], \R^n)$.Then the Nemytskii operator ${\mathcal N}_{\Gamma} : C^0([0,T], C^0([-r,0], \R^n)) \rightarrow PC^0_F([0,T],\R^n)$, defined by ${\mathcal N}_{\Gamma}(\Phi) := [ t \mapsto \Gamma(t, \Phi(t))]$, is continuous. 
\end{lemma}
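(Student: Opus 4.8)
The plan is to treat the two assertions implicit in the statement separately: first that $\mathcal{N}_\Gamma$ is well defined, i.e. that its image lands in $PC^0_F([0,T],\R^n)$, and then that it is continuous for the two sup-norms. For the well-definedness I would fix $\Phi \in C^0([0,T], C^0([-r,0],\R^n))$ and observe that, on each closed subinterval $[\tau_i,\tau_{i+1}]$, the map $t \mapsto (t,\Phi(t))$ is continuous into $[\tau_i,\tau_{i+1}] \times C^0([-r,0],\R^n)$; composing it with the restriction of $\Gamma$ to $[\tau_i,\tau_{i+1}] \times C^0([-r,0],\R^n)$, which is continuous by hypothesis, shows that $t \mapsto \Gamma(t,\Phi(t))$ is continuous on each $[\tau_i,\tau_{i+1}]$. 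Hence the one-sided limits at the interior nodes exist and the only possible discontinuity points lie in $\{\tau_1,\dots,\tau_\ell\} \subset F$, so that $\mathcal{N}_\Gamma(\Phi) \in PC^0_F([0,T],\R^n)$.

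For the continuity I would fix $\Phi$ in the domain together with $\epsilon > 0$, and reduce matters to the following uniform estimate: there is $\delta > 0$ such that $\Vert \Psi - \Phi \Vert_\infty \le \delta$ forces $\sup_{t \in [0,T]} \Vert \Gamma(t,\Psi(t)) - \Gamma(t,\Phi(t)) \Vert \le \epsilon$. It suffices to produce, for each $i \in \{0,\dots,\ell\}$, a number $\delta_i > 0$ with the property that $t \in [\tau_i,\tau_{i+1}]$ and $\Vert \psi - \Phi(t) \Vert_\infty \le \delta_i$ imply $\Vert \Gamma(t,\psi) - \Gamma(t,\Phi(t)) \Vert \le \epsilon$, and then to set $\delta := \min_{0 \le i \le \ell} \delta_i$; indeed any $t \in [0,T]$ belongs to some $[\tau_i,\tau_{i+1}]$, and $\Vert \Psi(t) - \Phi(t) \Vert_\infty \le \Vert \Psi - \Phi \Vert_\infty \le \delta \le \delta_i$.

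The heart of the argument, and the step I expect to be the main obstacle, is the construction of such a $\delta_i$, because the second factor $C^0([-r,0],\R^n)$ is infinite-dimensional and one cannot simply invoke compactness of a tubular neighborhood of the graph $\{(t,\Phi(t)) : t \in [\tau_i,\tau_{i+1}]\}$ together with uniform continuity of $\Gamma$ there. I would instead argue by contradiction through a sequential compactness argument that uses only the compactness of the time interval. If no such $\delta_i$ existed there would be $\epsilon_0 > 0$ and sequences $t_k \in [\tau_i,\tau_{i+1}]$ and $\psi_k \in C^0([-r,0],\R^n)$ with $\Vert \psi_k - \Phi(t_k) \Vert_\infty \le 1/k$ but $\Vert \Gamma(t_k,\psi_k) - \Gamma(t_k,\Phi(t_k)) \Vert > \epsilon_0$. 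Extracting a subsequence with $t_k \to t_*$ in the compact interval $[\tau_i,\tau_{i+1}]$, the continuity of $\Phi$ gives $\Phi(t_k) \to \Phi(t_*)$, whence also $\psi_k \to \Phi(t_*)$; thus both $(t_k,\psi_k)$ and $(t_k,\Phi(t_k))$ converge to $(t_*,\Phi(t_*))$ in $[\tau_i,\tau_{i+1}] \times C^0([-r,0],\R^n)$, and the continuity of $\Gamma$ there forces $\Gamma(t_k,\psi_k) - \Gamma(t_k,\Phi(t_k)) \to 0$, contradicting the lower bound $\epsilon_0$. With the $\delta_i$ in hand, taking their minimum and passing to the supremum over $t$ completes the proof; the only point needing a word of care is that a shared node $\tau_i$ belongs to two adjacent subintervals, but $\Gamma(\tau_i,\cdot)$ is a single map defined on all of $C^0([-r,0],\R^n)$, so the estimates coming from either side are consistent.
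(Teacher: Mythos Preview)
Your proposal is correct and follows the same overall structure as the paper: first the well-definedness on each closed subinterval $[\tau_i,\tau_{i+1}]$, then a uniform $\delta_i$ on each piece and $\delta := \min_i \delta_i$. The only difference is that the paper does not prove the existence of the $\delta_i$ itself but imports it from an external reference (Lemma~3.10 in \cite{BCNP}, a continuity result for Nemytskii operators on $C^0$), whereas you supply a direct sequential-compactness argument exploiting only the compactness of $[\tau_i,\tau_{i+1}]$ and the continuity of $\Phi$. Your route is thus more self-contained, while the paper's is shorter by quotation; the logical skeleton is identical. One cosmetic remark: since $\epsilon$ is already fixed before you negate the existence of $\delta_i$, the constant you call $\epsilon_0$ in the contradiction should simply be that same $\epsilon$.
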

\begin{proof}
Let $\Phi \in C^0([0,T], C^0([-r,0], \R^n))$. Then, for all $i \in \{ 0,...,{\ell} \}$, the restriction of  $\Phi$ at $[\tau_i, \tau_{i+1}]$ belongs to $C^0(
[\tau_i, \tau_{i+1}],  C^0([-r,0], \R^n))$. Since $\Gamma$ is continuous on $[\tau_i, \tau_{i+1}] \times C^0([-r,0], \R^n)$, we know that ${\mathcal N}_{\Gamma}(\Phi) \in C^0([\tau_i, \tau_{i+1}], \R^n)$. Moreover we obtain that ${\mathcal N}_{\Gamma}(\Phi) \in PC^0_F([0,T], \R^n)$.\\
 We arbitrarily fix $\Phi \in C^0([0,T], C^0([-r,0], \R^n))$. Using Lemma 3.10 in \cite{BCNP}, we know that: $\forall i \in \{ 0,...,{\ell} \}$,$ \forall \epsilon > 0$, $\exists \beta_{\epsilon, i} > 0$, such that, $\forall \Psi_i \in C^0([\tau_i, \tau_{i+1}], C^0([-r,0], \R^n))$, $\sup_{t \in [\tau_i, \tau_{i+1}]}\Vert \Psi_i(t) - P(t) \Vert \leq \beta_{\epsilon, i} \Longrightarrow \sup_{t \in [\tau_i, \tau_{i+1}]} \Vert \Gamma(t, \Psi_i(t)) - \Gamma(t, \Phi(t)) \Vert \leq \epsilon$.
We arbitrarily fix $\epsilon > 0$ and we set $\beta_{\epsilon} := \min_{0 \leq i \leq {\ell}} \beta_{\epsilon, i} > 0$. Let $\Psi \in C^0([0,T], C^0([-r,0], \R^n))$ such that $\sup_{t \in [0,T]} \Vert \Psi(t) - \Phi(t) \Vert \leq \beta_{\epsilon}$; then we have, $\forall i \in \{ 0,...,{\ell} \}$, $\sup_{t \in [\tau_i, \tau_{i+1}]}\Vert \Psi(t) - \Phi(t) \Vert \leq \beta_{\epsilon, i}$ that implies: $\forall i \in \{ 0,...,{\ell} \}$, $\sup_{t \in [\tau_i, \tau_{i+1}]} \Vert \Gamma(t, \Psi(t)) - \Gamma(t, \Phi(t)) \Vert \leq \epsilon$, and consequently we obtain that\\
$\sup_{t \in [0,T]} \Vert \Gamma(t, \Psi(t)) - \Gamma(t, \Phi(t)) \Vert \leq \epsilon$.
\end{proof}
\begin{lemma}\label{lem54}
There exist $\delta_2 \in (0, + \infty)$ and $c_4 \in (0, + \infty)$ such the following assertions hold.
\begin{enumerate}
\item[(i)] $\forall a \in \R^N_+ \cap \overline{B}(0, \delta_2)$, $x(\cdot,S,a)$ is defined on $[0,T]$ all over.
\item[(ii)]  $\forall a \in \R^N_+ \cap \overline{B}(0, \delta_2)$, $\forall t \in [0,T]$, $\Vert x(t,S,a)- \overline{x}(t) \Vert \leq c_4. \Vert a \Vert$.
\end{enumerate}
\end{lemma}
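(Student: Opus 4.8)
The plan is to establish (i) and (ii) simultaneously by a continuation argument whose engine is a Gronwall estimate: the estimate (ii) furnishes the a priori bound that keeps the solution inside a fixed bounded tube around $\overline{x}$, and this bound in turn prevents blow-up, so that the solution can be prolonged all over $[0,T]$.

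First I would set $M := \sup\{ \Vert D_2 f(t,\psi,\xi) \Vert_{\mathfrak L} : (t,\psi,\xi) \in [0,T] \times C^0([-r,0],\R^n) \times U \}$, which is finite by the last line of \eqref{eq11}, and take $\delta_2 \in (0, \delta_1]$, with $\delta_1$ from Lemma \ref{lem52}, small enough that the intervals $I_i$ are contained in $[0,T]$ and pairwise disjoint. Fixing $a \in \R^N_+ \cap \overline{B}(0,\delta_2)$, let $[0, T_a)$ be the maximal interval on which $x(\cdot, S, a)$ is defined; local existence and uniqueness are furnished by \eqref{eq11} and the standard theory of functional differential equations, the equation being solved piece by piece on the finitely many subintervals where $u(\cdot,S,a)$ is continuous.

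The key step is the a priori estimate. Since $x(\cdot,S,a)_0 = \phi = \overline{x}_0$, for $t \in [0, T_a)$ one has $x(t,S,a) - \overline{x}(t) = \int_0^t [ f(s, x(\cdot,S,a)_s, u(s,S,a)) - f(s, \overline{x}_s, \overline{u}(s)) ]\, ds$. I would split the integrand as $[ f(s, x(\cdot,S,a)_s, u(s,S,a)) - f(s, \overline{x}_s, u(s,S,a)) ] + \Delta(s,S,a)$, where $\Delta$ is the quantity introduced in the proof of Lemma \ref{lem51}. By the mean value inequality and the definition of $M$, the first bracket is bounded by $M \Vert x(\cdot,S,a)_s - \overline{x}_s \Vert_{\infty}$; and because $x(\cdot,S,a)$ and $\overline{x}$ both coincide with $\phi$ on $[-r,0]$, this is at most $M\, w(s)$, where $w(s) := \sup_{0 \le \tau \le s} \Vert x(\tau,S,a) - \overline{x}(\tau) \Vert$. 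The integral of the second bracket is controlled by Lemma \ref{lem52}, namely $\int_0^T \Vert \Delta(s,S,a) \Vert\, ds \le c_2 \Vert a \Vert$. Taking norms, and then the supremum over $t$ (the right-hand side being nondecreasing), yields $w(t) \le c_2 \Vert a \Vert + M \int_0^t w(s)\, ds$ for $t \in [0, T_a)$. As $w$ is continuous and nondecreasing, Gronwall's lemma gives $w(t) \le c_2 e^{M t} \Vert a \Vert \le c_2 e^{MT} \Vert a \Vert$.

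Finally I would close the continuation. The bound just obtained confines $x(\cdot, S, a)$ to the tube around $\overline{x}$ of radius $c_4 \Vert a \Vert$ with $c_4 := c_2 e^{MT}$; on the corresponding bounded region $f$ is bounded, by its continuity and the compactness of $\{ \overline{x}_t : t \in [0,T] \}$, so the derivative of $x(\cdot,S,a)$ stays bounded and the solution admits a limit as $t \uparrow T_a$. Hence, were $T_a < T$, the solution could be prolonged beyond $T_a$, contradicting maximality; therefore $T_a = T$, which is (i), and the Gronwall bound, now valid up to $t = T$, is exactly (ii). I expect the main obstacle to be this continuation step rather than the estimate itself: one must check carefully, in the functional (delay) setting and across the finitely many discontinuities of $u(\cdot,S,a)$, that local existence and the no-blow-up prolongation genuinely hold, whereas the Gronwall computation is routine once the splitting and Lemma \ref{lem52} are in hand.
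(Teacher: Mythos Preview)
Your argument is correct, but it follows a genuinely different route from the paper. The paper does not invoke any local existence theory or continuation; instead it builds the solution $x(\cdot,S,a)$ from scratch by Picard iteration $x^0 := \overline{x}$, $x^{m}(t,S,a) := \overline{x}(0) + \int_0^t f(s, x^{m-1}(\cdot,S,a)_s, u(s,S,a))\,ds$, and proves by induction the two-line estimate $\Vert x^m(\cdot,S,a)_t - \overline{x}_t \Vert \le r$ and $\Vert x^{m+1}(\cdot,S,a)_t - x^m(\cdot,S,a)_t \Vert \le \frac{1}{m!} c_2 \Vert a \Vert (c_3 t)^m$, the first inequality keeping the iterates inside the tube $\mathfrak B$ where the Lipschitz constant $c_3$ is valid, the second yielding a Cauchy sequence. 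The limit is identified with $x(\cdot,S,a)$ via Lemma \ref{lem53} on the continuity of the Nemytskii operator, and (ii) drops out with $c_4 = c_2 e^{c_3 T}$. Your Gronwall-plus-continuation approach is shorter and conceptually cleaner once local existence for piecewise-continuous-in-time delay equations is taken for granted, and your splitting of the integrand is exactly the one that makes Lemma \ref{lem52} do the work; the paper's Picard scheme, by contrast, is fully self-contained and delivers global existence (i) as a construction rather than by a blow-up alternative, at the price of the explicit induction and the auxiliary Lemma \ref{lem53}. Both arrive at the same constant $c_4 = c_2 e^{MT}$ (with $M = c_3$), and your caveat about the continuation step across the discontinuities of $u(\cdot,S,a)$ is well placed: that is precisely the point the paper sidesteps by iterating rather than prolonging.
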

\begin{proof} Let $a \in \R^N_+ \cap \overline{B}(0, \delta_1)$ where $\delta_1$ is provided by the previous lemma. By induction we built the sequence $(x^m(\cdot,S,a))_{m \in \N}$ of functions from $[0,T]$ into $\R^n$ in the following way.
\[
\left\{
\begin{array}{l}
x^0(\cdot,S,a) := \overline{x}\\
\forall m \geq 1, \forall t \in [0,T], x^m(t,S;a) = \overline{x}(0) + \int_0^t f(s, x^{m-1}(\cdot,S,a)_s, u(s,S,a)) ds\\
\forall m \geq 1, x^m_0 = \phi
\end{array}
\right.
\]
We have $\Vert x^1(t,S,a) - x^0(t,S,a) \Vert = \Vert  x^1(t,S,a) -  \overline{x}(t) \Vert = \Vert \int_0^t f(s, \overline{x}_s, u(s,S,a)) ds - \int_0^t  f(s, \overline{x}_s,\overline{u}(s))ds \Vert \leq \int_0^t \Vert  f(s, \overline{x}_s, u(s,S,a)) -   f(s, \overline{x}_s,\overline{u}(s)) \Vert ds \leq $ \\
$   \int_0^T \Vert  f(s, \overline{x}_s, u(s,S,a)) -   f(s, \overline{x}_s,\overline{u}(s)) \Vert ds \leq c_2 \cdot \Vert a  \Vert$. And so we have proven the followowing assertion: $
\forall a \in \R^N_+ \cap \overline{B}(0, \delta_1), \forall t \in [0,T], \Vert x^1(t,S,a) - x^0(t,S,a) \Vert \leq c_2. \Vert a  \Vert$. Then, for all $t \in [0,T]$, $\forall \theta \in [-r,0]$, if $ t+ \theta \geq 0$ then $\Vert x^1(t+ \theta,S,a) - x^0(t + \theta,S,a) \Vert \leq c_2 \cdot \Vert a  \Vert$, and if $t+ \theta < 0$ then 
$\Vert x^1(t+ \theta,S,a) - x^0(t + \theta,S,a) \Vert = \Vert \phi(\theta) - \phi(\theta) \Vert = 0 \leq c_2 \cdot \Vert a  \Vert$. And so we have proven the following assertion.
\begin{equation}\label{eq56}
\forall a \in \R^N_+ \cap \overline{B}(0, \delta_1), \forall t \in [0,T],  \Vert x^1(\cdot,S,a)_t - x^0(\cdot,S,a)_t \Vert \leq c_2 \cdot \Vert a  \Vert.
\end{equation}
We introduce the set
$${\mathfrak B} := \{ (t, \phi, v) \in [0,T] \times C^0([-r,0], \R^n) \times U : \Vert \phi - \overline{x}_t \Vert_{\infty} \leq r, v \in \{\overline{u}(t), v_1,...,v_N \} \}.$$
Note that ${\mathfrak B}$ is bounded since it is included into the bounded set $[0,T] \times \overline{B}(\overline{x}, r) \times (\overline{u}([0,T]) \cup \{ v_1...,v_N \})$. Since $D_2f$ is a bounded operator, $c_3 := \sup \{ \Vert D_2f(t,\phi,v) \Vert : (t, \phi, v) \in {\mathfrak B} \} < + \infty$.  Now we want to prove by induction the following relation, for all $a \in \R^N_+ \cap \overline{B}(0, \delta_1)$ such that $\Vert a \Vert \leq \frac{r}{c_2} e^{-c_3T}$, $\forall m \in \N$, $\forall t \in [0,T]$:
\begin{equation}\label{eq57}
\left.
\begin{array}{r}
\Vert x^m(\cdot,S,a)_t - x^0(\cdot,S,a)_t \Vert \leq r, \\
\Vert x^{m+1}(\cdot,S,a)_t - x^m(\cdot,S,a)_t \Vert \leq \frac{1}{m !}c_2 \Vert a \Vert (c_3 t)^m.
\end{array}
\right\}
\end{equation}
For $m=0$, using Lemma \ref{lem52} and \eqref{eq56} we obtain, for all $t \in[0,T]$, $\Vert x^1(\cdot,S,a)_t- x^0(\cdot,S,a)_t \Vert \leq c_2 \cdot \Vert a \Vert = \frac{1}{0!} c_2 \cdot \Vert a \Vert (c_3 t)^0$, and $c_2 \cdot \Vert a \Vert \leq r e^{-c_3 T} \leq r$.\\  
Now we assume that \eqref{eq57} holds for the integer $n \leq p-1$. Then, for all $t \in [0,T]$, we have
\[
\begin{array}{cl}
\null & \Vert x^p(t,S,a) - x^0(t,S,a) \Vert = \Vert \sum_{m=0}^{p-1} (x^{m+1}(t,S,a) - x^m(t,S,a)) \Vert \\
\leq & \sum_{m=0}^{p-1} \Vert x^{m+1}(t,S,a) - x^m(t,S,a) \Vert \leq \sum_{m=0}^{p-1}(\frac{1}{m !}c_2 \Vert a \Vert (c_3 t)^m)\\
 = & c_2 \cdot \Vert a \Vert \sum_{m=0}^{p-1} \frac{(c_3 t)^m}{m !} \leq  c_2 \cdot \Vert a  \Vert e^{c_3 t} \leq c_2 \cdot \Vert a \Vert e^{c_3 T} \leq r,
\end{array}
\]
and since $x^p(\cdot, S,a)_0 = x^0(\cdot,S,a)_0 = \phi$ we deduce from the previous inequality that we have 
$$\forall t \in (0,T], \Vert x^p(\cdot,S,a)_t - x^0(\cdot,S,a)_t \Vert \leq r.$$
And so $(t, x^p(\cdot,S,a)_t, u(t,S,a)) \in {\mathfrak B}$, and using the assumption of induction and the mean value theorem with the boundedness of $D_2f$ we obtain, for all $t \in [0,T]$, 
\[
\begin{array}{cl} 
\null & \Vert f(t, x^p(\cdot,S,a)_t, u(t,S,a)) - f(t,x^{p-1}(\cdot,S,a)_t, u(t,S,a)) \Vert \\
\leq & c_3 \cdot \Vert x^p(\cdot,S,a)_t -x^{p-1}(\cdot,S,a)_t \Vert \leq  c_3 \frac{(c_3 t)^{p-1}}{(p-1)!} c_2 \cdot \Vert a \Vert =  \frac{c_3^p t^{p-1}}{(p-1)!} c_2 \cdot \Vert a \Vert
\end{array}
\]
which implies 
\[
\begin{array}{cl} 
\null & \Vert x^{p+1}(t,S,a) - x^p(t,S,a) \Vert\\
 = &\Vert \int_0^t f(s, x^{p}(\cdot,S,a)_s, u(s,S,a)) ds -\int_0^t f(s, x^{p-1}(\cdot,S,a)_s, u(s,S,a)) ds \Vert\\
 \leq & \int_0^t \Vert  f(s, x^{p}(\cdot,S,a)_s, u(s,S,a)) - f(s, x^{p-1}(\cdot,S,a)_s, u(s,S,a))  \Vert ds\\
\leq & \int_0^t (c_2 \cdot  \Vert a \Vert \frac{c_3^p s^{p-1}}{(p-1)!}) ds =  c_2 \cdot \Vert a \Vert \frac{c_3^p}{(p-1)!} \int_0^t  s^{p-1} ds = c_2 \cdot \Vert a \Vert c_3^p \frac{t^p}{p!}
\end{array}
\]
and since $x^{p+1}(\cdot, S,a)_0 = x^p(\cdot,S,a)_0 = \phi$ we deduce from the previous inequality that we have finished the induction and so the formula \eqref{eq57} is proven.
\vskip1mm
From \eqref{eq57} it is easy to see that $([t \mapsto x^m(\cdot,S,a)_t])_{m \in \N}$ is a Cauchy sequence into $(C^0([0,T], C^0([-r,0], \R^n)), \Vert \cdot \Vert_{\infty})$ and consequently we obtain that $(x^m(\cdot,S,a))_{m \in \N}$ is a Cauchy sequence into $C^0([-r,T], \R^n)$. Since this last space is complete, there exists $x_*(\cdot,S,a) \in  C^0([-r,T], \R^n)$ such that $\lim_{m \rightarrow + \infty} \sup_{t \in [-r,T]} \Vert x^m(t,S,a) - x_*(t,S,a) \Vert = 0$ which implies that $\lim_{m \rightarrow + \infty} \sup_{t \in [0,T]} \Vert x^m(\cdot,S,a)_t - x_*(\cdot,S,a)_t \Vert = 0$. Using Lemma \ref{lem53} with $F = N_{u(\cdot,S,a)}$ and $\Gamma(t, \phi) = f(t,\phi, u(t,S,a))$ we obtain $\lim_{m \rightarrow + \infty} \sup_{t \in [0,T]} \Vert f(t, x^m(\cdot,S,a)_t, u(t,S,a)) - f(t, x_*(\cdot,S,a)_t, u(t,S,a)) \Vert = 0$ which implies, $\forall t \in [0,T]$, 
$$\lim_{m \rightarrow + \infty} \int_0^t f(s, x^m(\cdot,S,a)_s, u(s,S,a)) ds = \int_0^t f(s, x_*(\cdot,S,a)_s, u(s,S,a) ds.$$
Taking $m \rightarrow + \infty$ into the formula 
$$x^m(t,S,a) = \phi(0) + \int_0^t(f(s, x^{m-1}(\cdot,S,a)_s, u(s,S,a)) ds$$
we obtain $x_*(t,S,a) = \phi(0) + \int_0^t f(s, x_*(\cdot,S,a)_s, u(s,S,a)) ds$. Note that the set of the discontinuity points of the integrand of this last integral is included into $N_{u(\cdot,S,a)} = N_{\overline{u}} \cup \{ t_i + b_i : i \in \{ 1,...,N \}\} \cup  \{ t_i + b_i+ a_i : i \in \{ 1,...,N \}\}$. And so $x_*(\cdot,S,a) \in C^0([-r,T], \R^n) \cap PC^1([0,T], \R^n)$. From the last integral equation we deduce
$$\forall t \in [0,T] \setminus N_{u(\cdot,S,a)}, \frac{\partial x_*(t,S,a)}{\partial t} = f(t, x_*(\cdot,S,a)_t, u(t,S,a)).$$
Since $x^m(\cdot,S,a)_0 = \phi$ for all $m \in \N$, we have also $x_*(\cdot,S,a)_0 = \phi$. Then using the uniqueness of the solution of a Cauchy problem we obtain that 
\begin{equation}\label{eq58}
x_*(\cdot,S,a) = x(\cdot,S,a).
\end{equation}
We have yet seen that $\Vert x^m(t,S,a) - x^0(t,S,a) \Vert = \Vert x^m(t,S,a) - \overline{x}(t) \Vert \leq c_2 \cdot \Vert a \Vert e^{c_3 T}$ for all $t \in [0,T]$. And so setting $c_4 := k e^{c_2 T}$ and taking $m \rightarrow + \infty$ we obtain $\Vert x(t,S,a) - \overline{x}(t) \Vert = \Vert x_*(t,S,a) -  \overline{x}(t) \Vert \leq c_4$.
\end{proof}
\begin{lemma}\label{lem55}
For all $S \in {\mathfrak S}$, the two assertions hold.
\begin{enumerate}
\item[(i)] $x(T,S,\cdot)$ is differentiable at $0$.
\item[(ii)] For all $i \in \{1,...,N \}$, 
$$\frac{\partial x(T,S,0)}{\partial a_i} = X(T,t_i)[f(t_i,\overline{x}_{t_i}, v_i) -  f(t_i,\overline{x}_{t_i},\overline{u}(t_i))].$$
\end{enumerate}
\end{lemma}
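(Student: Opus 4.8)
My plan is to show that the nonlinear trajectory $x(\cdot,S,a)$ differs from $\overline{x} + z(\cdot,S,a)$ by a quantity that is $o(\Vert a \Vert)$ at $a=0$, and then to read off the derivative from Lemma \ref{lem51}. Set $w(\cdot,S,a) := x(\cdot,S,a) - \overline{x} - z(\cdot,S,a)$; since $x(\cdot,S,a)_0 = \overline{x}_0 = \phi$ and $z(\cdot,S,a)_0 = 0$, we have $w(\cdot,S,a)_0 = 0$. Subtracting the equations satisfied by $x(\cdot,S,a)$, by $\overline{x}$, and by $z(\cdot,S,a)$, a direct computation gives, for $t \in [0,T]\setminus N_{u(\cdot,S,a)}$,
\[
\frac{\partial w(t,S,a)}{\partial t} = D_2f(t,\overline{x}_t,\overline{u}(t))\, w(\cdot,S,a)_t + g(t,a),
\]
where the forcing is the first-order remainder
\[
g(t,a) := [f(t,x(\cdot,S,a)_t,u(t,S,a)) - f(t,\overline{x}_t,u(t,S,a))] - D_2f(t,\overline{x}_t,\overline{u}(t))(x(\cdot,S,a)_t - \overline{x}_t).
\]
Thus $w(\cdot,S,a)$ solves a linear problem of the form \eqref{eq36} with $\sigma = 0$, zero initial function and forcing $g(\cdot,a)$, so the variation-of-constants formula of Theorem \ref{th34} yields $w(T,S,a) = \int_0^T X(T,\alpha)\, g(\alpha,a)\, d\alpha$, where $X$ is the resolvent associated with $L(t) = D_2f(t,\overline{x}_t,\overline{u}(t))$. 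Since $X$ is bounded (Theorem \ref{th34}(i)), everything reduces to proving that $\int_0^T \Vert g(\alpha,a) \Vert\, d\alpha = o(\Vert a \Vert)$ as $a \to 0$.

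The heart of the matter is this integral estimate, which I would obtain by splitting $[0,T]$ into $\cup_{i=1}^N I_i$ and its complement. On the complement we have $u(t,S,a) = \overline{u}(t)$, so $g(t,a)$ is exactly the integral remainder
\[
g(t,a) = \int_0^1 [D_2f(t, \overline{x}_t + s(x(\cdot,S,a)_t - \overline{x}_t), \overline{u}(t)) - D_2f(t,\overline{x}_t,\overline{u}(t))](x(\cdot,S,a)_t - \overline{x}_t)\, ds.
\]
Using the bound $\Vert x(\cdot,S,a)_t - \overline{x}_t \Vert \leq c_4 \Vert a \Vert$ from Lemma \ref{lem54} together with the continuity of $D_2f$ (through a Nemytskii-continuity argument as in Lemma \ref{lem53}, applied to the map $(t,\phi)\mapsto D_2f(t,\phi,\overline{u}(t))$), the operator norm inside the integral tends to $0$ uniformly in $t$ and $s$ as $a \to 0$; hence $\Vert g(t,a) \Vert \leq \omega(a)\, c_4 \Vert a \Vert$ with $\omega(a) \to 0$, and this part contributes at most $T\,\omega(a)\, c_4 \Vert a \Vert = o(\Vert a \Vert)$. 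On $\cup_i I_i$, the mean value inequality together with the bound $c_3$ on $D_2f$ over ${\mathfrak B}$ gives $\Vert g(t,a) \Vert \leq 2 c_3 c_4 \Vert a \Vert$, while the total length of $\cup_i I_i$ is $\sum_i a_i = O(\Vert a \Vert)$; the product is therefore $O(\Vert a \Vert^2) = o(\Vert a \Vert)$.

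Adding the two contributions gives $\int_0^T \Vert g(\alpha,a)\Vert\, d\alpha = o(\Vert a \Vert)$, hence $w(T,S,a) = o(\Vert a \Vert)$, that is $x(T,S,a) = \overline{x}(T) + z(T,S,a) + o(\Vert a \Vert)$. Because $x(T,S,0) = \overline{x}(T)$ and because Lemma \ref{lem51} provides $z(T,S,a) = \sum_{i=1}^N a_i X(T,t_i)[f(t_i,\overline{x}_{t_i},v_i) - f(t_i,\overline{x}_{t_i},\overline{u}(t_i))] + o(\Vert a \Vert)$, I conclude that $x(T,S,\cdot)$ is differentiable at $0$ with the partial derivatives announced in (ii), which also establishes (i). The main obstacle is the uniform-in-$t$ control of the remainder $g$ on the complement of $\cup_i I_i$: this is where the continuity and boundedness of $D_2f$ in \eqref{eq11}, and the compactness features already exploited in the proof of Lemma \ref{lem54} (or, equivalently, the Nemytskii continuity of Lemma \ref{lem53}), are essential to turn the pointwise differentiability of $f$ in its second argument into an integrable, uniformly small bound.
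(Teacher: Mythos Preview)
Your argument is correct and follows the same overall scheme as the paper: one introduces $\zeta(\cdot,S,a)=x(\cdot,S,a)-\overline{x}-z(\cdot,S,a)$ (your $w$), identifies its forcing term $\xi$ (your $g$), applies the variation-of-constants formula of Theorem~\ref{th34} to write $\zeta(T,S,a)=\int_0^T X(T,\alpha)\xi(\alpha,S,a)\,d\alpha$, and then shows this integral is $o(\Vert a\Vert)$.

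The one substantive difference lies in the estimate of $\int_0^T\Vert\xi\Vert$. The paper factors $\xi(t,S,a)=\Vert x(\cdot,S,a)_t-\overline{x}_t\Vert_\infty\cdot E(t,S,a)$ and asserts that $E(t,S,a)=0$ for $t\in[0,T]\setminus\bigcup_i I_i$; from this the whole integral is supported on $\bigcup_i I_i$, has length $\sum_i a_i$, and the result follows. You instead keep the complement in play and control it by the uniform smallness of the Taylor remainder, using continuity of $D_2f$ together with Lemma~\ref{lem54} to produce a modulus $\omega(a)\to 0$. Your route is the more robust one: once $t$ lies to the right of the first needle interval $I_1$, one generally has $x(\cdot,S,a)_t\neq\overline{x}_t$ even though $u(t,S,a)=\overline{u}(t)$, so the bracket defining $E$ need not vanish there and the paper's shortcut is not justified as written. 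Your uniform-continuity argument on the complement (Lemma~\ref{lem53}/compactness) is precisely what is needed to close this gap, and your treatment on $\bigcup_i I_i$ coincides with the paper's.
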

\begin{proof}
After Lemma \ref{lem51} we know that $z(T,S,\cdot)$ is differentiable at 0. Then to prove (i) it suffices to prove that the mapping $x(T,S,\cdot) - z(T,S,\cdot)$ is differentiable at 0. We introduce 
\begin{equation}\label{eq59}
\zeta(t,S,a) := (x(t,S,a) - z(t,S,a) - (x(t,S,0) - z(t,S,0)) = x(t,S,a) - \overline{x}(t) - z(t,S,a).
\end{equation}
and 
\begin{equation}\label{eq510}
\xi(t,S,a) := \frac{\partial \zeta(t,S,a)}{\partial t} - D_2f(t, \overline{x}_t, \overline{u}(t)) \zeta(\cdot,S,a)_t.
\end{equation}
We calculate
\[
\begin{array}{ccl}
\frac{\partial \zeta(t,S,a)}{\partial t}  &=& \frac{\partial x(t,S,a)}{\partial t}  - \overline{x}'(t)  -  \frac{\partial z(t,S,a)}{\partial t} \\
\null &=& f(t, x(\cdot,S,a)_t, u(t,S,a)) - f(t, \overline{x}_t, \overline{u}(t)\\
\null & \null & -D_2f(t, \overline{x}_t, \overline{u}(t))z(\cdot,S,a)_t - f(t, \overline{x}_t,u(t,S,a)) + f(t, \overline{x}_t, \overline{u}(t))\\
\null &=& f(t, x(\cdot,S,a)_t, u(t,S,a)) - f(t, \overline{x}_t,u(t,S,a))\\
\null &\null & -D_2f(t, \overline{x}_t, \overline{u}(t))z(\cdot,S,a)_t
\end{array}
\]
which implies, using \eqref{eq59} and \eqref{eq510}, that
\[
\begin{array}{ccl}
\xi(t,S,a) &=& \frac{\partial \zeta(t,S,a)}{\partial t} - D_2f(t, \overline{x}_t, \overline{u}(t))(x(\cdot,S,a) - \overline{x}_t - z(\cdot,S,a))\\
\null &= & f(t, x(\cdot,S,a)_t, u(t,S,a)) - f(t, \overline{x}_t,u(t,S,a))\\
\null & \null & - D_2f(t, \overline{x}_t, \overline{u}(t))(x(\cdot,S,a) - \overline{x}_t)\\
\null & = & \int_0^1 D_2 f(t, \overline{x}_t + \theta[x(\cdot,S,a) - \overline{x}_t], u(t,S,a)) d \theta (x(\cdot,S,a) - \overline{x}_t)\\
\null & \null &  - D_2f(t, \overline{x}_t, \overline{u}(t))(x(.,S,a) - \overline{x}_t)
\end{array}
\]
and so we obtain
\begin{equation}\label{eq511}
\xi(t,S,a) = \Vert x(\cdot,S,a)_t - \overline{x}_t \Vert_{\infty} \cdot E(t,S,a)
\end{equation}
where
$E(t,S,a) := \left(  \int_0^1 D_2 f (t, \overline{x}_t + \theta[x(\cdot,S,a) - \overline{x}_t], u(t,S,a)) d \theta  - D_2f(t, \overline{x}_t, \overline{u}(t)) \right)$ \\
$ \frac{ x(\cdot,S,a)_t - \overline{x}_t}{\Vert  x(\cdot,S,a)_t - \overline{x}_t  \Vert}$ if $x(\cdot,S,a)_t \neq \overline{x}_t$ and $E(t,S,a) := 0$ if $x(\cdot,S,a)_t = \overline{x}_t$.
Since $D_2f$ is bounded, there exists $c_4 > 0$ such that, for all $t \in [0,T]$, for all $a \in \overline{B}(0,\delta_2)$, $\Vert E(t,S,a) \Vert \leq c_4$. Note that $E(t,S,a) = 0$ when $t \in [0,T] \setminus \cup_{1 \leq i \leq N} I_i$, and consequently we have 
\[
\begin{array}{ccl}
\int_0^T \Vert E(t,S,a) \Vert dt &=& \int_{ \cup_{1 \leq i \leq N} I_i}  \Vert E(t,S,a) \Vert dt + \int_{[0,T] \setminus \cup_{1 \leq i \leq N} I_i}  \Vert E(t,S,a) \Vert dt\\
\null & =&  \sum_{i=1}^N \int_{I_i} \Vert E(t,S,a) \Vert dt = \sum_{i=1}^N \int_{t_i + b_i}^{t_i + b_i + a_i} \Vert E(t,S,a) \Vert dt\\
\null & \leq & \sum_{i=1}^N a_i c_4 = c_4 \cdot \Vert a \Vert
\end{array}
\]
that implies 
\begin{equation}\label{eq512}
\lim_{a \rightarrow 0} \int_0^T \Vert E(t,S,a) \Vert dt = 0.
\end{equation}
From \eqref{eq510} we have $\frac{\partial \zeta(t,S,a)}{\partial t} = D_2f(t, \overline{x}_t, \overline{u}(t)) \zeta(\cdot,S,a)_t + \xi(t,S,a)$, and from \eqref{eq59} we also have $\zeta(\cdot,S,a)_0 = x(\cdot,S,a)_0 - \overline{x}_0 - z(\cdot,S,a) = \phi - \phi - 0$, and so we can use Theorem \ref{th34} and assert that $\zeta(t,S,a) = X(t,0)0 + \int_0^t X(t,y)(\int_{-r}^{-y} d_2 \eta(y, \theta) 0(y + \theta))dy) + \int_0^t X(t, \alpha) \xi(\alpha, S,a) d \alpha  = 0+0 +  \int_0^t X(t, \alpha) \xi(\alpha, S,a) d \alpha$, and so we have established the following formula
\begin{equation}\label{eq513}
\zeta(T,S,a) = \int_0^T X(T,t) \xi(t,S,a) dt.
\end{equation}
We know that $X$ is bounded on $[0,T] \times [0,T]$, and using Lemma \ref{lem54}, \eqref{eq513} and \eqref{eq511} we obtain
\[ 
\begin{array}{l}
\Vert (x(T,S,a) - z(T,S,a) - (x(T,S,0) - z(T,S,0)) \Vert = \Vert \zeta(T,S,a) \Vert\\
 \leq \int_0^T (\Vert X(T,t) \Vert \cdot \Vert \xi(t,S,a) \Vert) dt \leq
\Vert X \Vert_{\infty} \int_0^T (\Vert x(\cdot,S,a)_t - \overline{x}_t \Vert_{\infty} \cdot \Vert E(t,S,a) \Vert )dt\\
\leq \Vert X \Vert_{\infty} \cdot c_3 \cdot \Vert a \Vert \cdot \int_0^T \Vert E(t,S,a) \Vert dt 
\end{array}
\]
which implies (using \eqref{eq512}) that the mapping $x(T,S,\cdot) - z(T,S,\cdot)$ is differentiable at 0 and that its differential at 0 is equal to zero. Then using lemma \ref{lem51} we obtain $D_a X(T,S,0)a = D_a z(T,S,0) a = \sum_{i=1}^N a_i X(T,t_i) (f(t_i, \overline{x}_{t_i}, v_i) - f(t_i, \overline{x}_{t_i},\overline{u}(t_i)))$.
\end{proof}
The following multiplier rule comes from \cite{Mi}.
\begin{lemma}\label{lem56}
Let ${\mathcal V}$ be a neighborhood of 0 into $\R^N$. Let $n_i, q_e \in \N$. Let $\psi^j : {\mathcal V} \rightarrow \R$ be differentiable functions at 0, for $j \in \{ 0,...,n_i + n_e \}$. We assume that 0 is a solution of the following maximization problem
\[
\left\{
\begin{array}{rl}
{\rm Maximize} & \psi^0(a)\\
{\rm when} & a \in {\mathcal V} \cap \R^n_+\\
\null & \forall j = 1,...,n_i, \; \psi^j(a) \geq 0\\
\null & \forall j = n_i+1, ..., n_i+n_e, _; \psi^j(a) = 0.
\end{array}
\right.
\]
Then there exists $(\lambda_0,..., \lambda_{n_i + n_e}) \in \R^{1+ n_i + n_e}$ which satisfies the following conditions.
\begin{enumerate}
\item[(i)] $(\lambda_0,..., \lambda_{n_i + n_e})$ is non zero
\item[(ii)] $\forall j = 0,...,n_i, \; \lambda_j \geq 0$
\item[(iii)] $\forall j = 1,...,n_i, \; \lambda_j \psi^j(0) = 0$
\item[(iv)] $\forall a \in \R^N_+, \; \sum_{j= 0}^{n_i + n_e} \lambda_j D\psi^j(0).a \leq 0$.
\end{enumerate}
\end{lemma}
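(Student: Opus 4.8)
The plan is to read this as a finite‑dimensional Fritz John multiplier rule and to reduce it, by convex duality, to a single geometric fact that the optimality of $0$ must rule out through a topological (Brouwer) argument. First I would record the first‑order data. Since $0$ is feasible we have $\psi^j(0)\ge 0$ for $1\le j\le n_i$ and $\psi^j(0)=0$ for the equality indices, and differentiability at $0$ gives $\psi^j(a)=\psi^j(0)+D\psi^j(0)a+o(\Vert a\Vert)$. Write $w_j:=D\psi^j(0)$ (so $w_j a$ denotes its value at $a$) and let $I_0:=\{j\in\{1,\dots,n_i\}:\psi^j(0)=0\}$ be the active inequalities. For the inactive ones I would simply set $\lambda_j:=0$; this produces (iii) and, since such constraints hold on a whole neighbourhood of $0$, removes them from the local analysis. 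Finally I would observe that (iv) is equivalent to requiring every component of the row vector $\sum_j\lambda_j w_j$ to be $\le 0$ (test against $a=e_k$), so only the active data $\{w_0\}\cup\{w_j\}_{j\in I_0}\cup\{w_j\}_{j\in E}$ matter, $E$ being the equality indices.

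Next I would set up the duality. Consider the cone
\[
K:=\{(w_0 a+s_0,\,(w_j a+s_j)_{j\in I_0},\,(w_j a)_{j\in E}): a\in\R^N_+,\ s_0\le 0,\ s_j\le 0\}\subseteq\R^{1+|I_0|+n_e}.
\]
A direct computation of its polar shows that $\lambda=(\lambda_0,(\lambda_j)_{I_0},(\lambda_j)_{E})$ lies in $K^{*}$ exactly when $\lambda_0\ge0$, $\lambda_j\ge0$ for $j\in I_0$, and $\sum_j\lambda_j w_j\le 0$ componentwise, that is, precisely conditions (ii) and (iv). Because $K$ is the sum of a finitely generated cone (the linear image of $\R^N_+$) and a polyhedral cone, it is closed, so for this convex cone $K^{*}\neq\{0\}$ if and only if $K\neq\R^{1+|I_0|+n_e}$. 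Thus the whole statement, with (i) reading as $K^{*}\neq\{0\}$, reduces to proving that $K$ is a proper subcone; any nonzero $\lambda\in K^{*}$, extended by $\lambda_j=0$ on the inactive indices, then yields (i)--(iv).

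The crux, and the step I expect to be the main obstacle, is to show that $K=\R^{1+|I_0|+n_e}$ is incompatible with the optimality of $0$. If $K$ were the whole space then, reading off the equality slots, the map $a\mapsto(w_j a)_{j\in E}$ would be surjective on $\R^N_+$, and the point $(1,(1)_{I_0},0)\in K$ would furnish $\tilde a\in\R^N_+$ with $w_0\tilde a>0$, $w_j\tilde a>0$ for $j\in I_0$, and $(w_j\tilde a)_{j\in E}=0$. The genuine difficulty is that the $\psi^j$ are only differentiable at $0$ and no constraint qualification is assumed, so a first‑order admissible direction need not be tangent to the feasible set to higher order; this is exactly where the equality constraints make a naive variation $t\tilde a$ fail. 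Here, however, the surjectivity that comes for free from $K=\R^{1+|I_0|+n_e}$ supplies precisely the qualification needed: using the expansions $\psi^j(a)=\psi^j(0)+w_j a+o(\Vert a\Vert)$ together with continuity near $0$, I would build, for an arbitrarily small radius, a continuous self‑map of a small simplex based at $0$ that moves in the direction $\tilde a$ while correcting the equality drift, and apply Brouwer's fixed‑point theorem to obtain a fixed point corresponding to a feasible $\bar a\neq 0$ with $\psi^0(\bar a)>\psi^0(0)$ and all constraints met, contradicting that $0$ solves the problem. This topological construction is what replaces the implicit function theorem and what covers the abnormal case in which $\lambda_0$ may vanish. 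Consequently $K\neq\R^{1+|I_0|+n_e}$, so a nonzero multiplier exists and the required conditions follow.
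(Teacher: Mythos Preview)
The paper does not give its own proof of this lemma: immediately before the statement it writes ``The following multiplier rule comes from \cite{Mi}'' and then moves on, so there is nothing in the paper to compare your argument against.

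On its own merits, your route---reduce to the active constraints, recast (i)--(iv) as nontriviality of the polar of the polyhedral cone $K$, and then rule out $K=\R^{1+|I_0|+n_e}$ by a Brouwer-type contradiction---is a standard and correct strategy for the Fritz John rule. The convex-duality reduction is clean: $K$ is a sum of finitely generated cones, hence polyhedral and closed, and for a closed convex cone one has $K^{*}=\{0\}$ if and only if $K$ is the whole space.

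Where the sketch is thin is the Brouwer step, and two concrete points need tightening. First, Brouwer's theorem requires the $\psi^j$ to be continuous on a small domain, not merely differentiable at the single point $0$; the hypothesis as stated gives only the latter. In the paper's actual application (Lemma~\ref{lem57}) continuity \emph{is} available---$a\mapsto x(T,S,a)$ is Lipschitz near $0$ by Lemma~\ref{lem54} and the $g^j$ are $C^1$---so in context you are safe, but your write-up silently imports this extra regularity. Second, to ``correct the equality drift'' while remaining in $\R^N_+$ you need the base direction $\tilde a$ and the correction directions to lie in the interior of $\R^N_+$; this does follow from $K=\R^{1+|I_0|+n_e}$ (pick $a'\in\R^N_+$ realising $(2,(2)_{I_0},0)$, use surjectivity to find $b\in\R^N_+$ with $(w_jb)_{E}=-(w_j\mathbf 1)_{E}$, and take $a'+\varepsilon(\mathbf 1+b)\in\mathrm{int}\,\R^N_+$ for small $\varepsilon>0$), but the argument is not in your text. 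With these two points made explicit, the approach goes through.
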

Now we apply this multiplier rule to our problem that permits us to obtain the following lemma.
\begin{lemma}\label{lem57}
Let $S \in {\mathfrak S}$. Then there exists $\Lambda(S) = (\lambda_j)_{0 \leq j \leq n_i + n_e} \in \R^{1+ p+ q}$ which satisfies the following properties.
\begin{enumerate}
\item[(i)] $\sum_{j=0}^{n_i + n_e} \vert \lambda_j \vert = 1$
\item[(ii)] $\forall j = 0,...,n_i, \; \lambda_j \geq 0$
\item[(iii)] $\forall j = 1,...,n_i, \; \lambda_j g^j(\overline{x}(T)) = 0$
\item[(iv)] $\forall i = 1,...,N$, \\ $\sum_{j=0}^{n_i + n_e} \lambda_j Dg^j(\overline{x}(T))X(T, t_i)[f(t_i, \overline{x}_{t_i}, v_i) -f(t_i, \overline{x}_{t_i}, \overline{u}(t_i))] \leq 0 $.
\end{enumerate}
\end{lemma}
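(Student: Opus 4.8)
The plan is to freeze $S$ and recast the terminal constraints along the needle perturbations as a finite-dimensional optimization in the parameters $a=(a_1,\dots,a_N)$, so that the conclusion drops out of Michel's multiplier rule (Lemma \ref{lem56}). First I would fix a neighborhood $\mathcal V$ of $0$ in $\R^N$ small enough that $x(\cdot,S,a)$ is defined on all of $[0,T]$ for every $a\in\mathcal V\cap\R^N_+$, which is exactly what Lemma \ref{lem54} guarantees, and then set $\psi^j(a):=g^j(x(T,S,a))$ for $j\in\{0,\dots,n_i+n_e\}$. The differentiability of these $\psi^j$ at $0$ is understood on the cone $\R^N_+$, which is precisely the kind of one-sided differentiability that Lemma \ref{lem55} provides and that Lemma \ref{lem56} is built to accept.

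The conceptually central step is to verify that $a=0$ solves the maximization problem of Lemma \ref{lem56}. Since $u(\cdot,S,0)=\overline u$ forces $x(\cdot,S,0)=\overline x$, we have $\psi^j(0)=g^j(\overline x(T))$. Whenever $a\in\mathcal V\cap\R^N_+$ satisfies $\psi^j(a)\ge 0$ for $j=1,\dots,n_i$ and $\psi^j(a)=0$ for $j=n_i+1,\dots,n_i+n_e$, the pair $(x(\cdot,S,a),u(\cdot,S,a))$ is by construction an admissible process of $(\mathfrak M)$: it satisfies the functional differential equation, the initial condition $x(\cdot,S,a)_0=\phi$, and exactly these terminal constraints. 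Optimality of $(\overline x,\overline u)$ then yields $\psi^0(a)=g^0(x(T,S,a))\le g^0(\overline x(T))=\psi^0(0)$, so $0$ is a local maximizer.

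Next I would compute the differentials. By \eqref{eq12} each $g^j$ is of class $C^1$, and by Lemma \ref{lem55} the map $x(T,S,\cdot)$ is differentiable at $0$ with $\frac{\partial x(T,S,0)}{\partial a_i}=X(T,t_i)\bigl[f(t_i,\overline x_{t_i},v_i)-f(t_i,\overline x_{t_i},\overline u(t_i))\bigr]$; the chain rule then makes $\psi^j=g^j\circ x(T,S,\cdot)$ differentiable at $0$ with
$$D\psi^j(0)\cdot a=\sum_{i=1}^N a_i\,Dg^j(\overline x(T))X(T,t_i)\bigl[f(t_i,\overline x_{t_i},v_i)-f(t_i,\overline x_{t_i},\overline u(t_i))\bigr].$$

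Finally I would invoke Lemma \ref{lem56}, obtaining a nonzero vector $(\lambda_0,\dots,\lambda_{n_i+n_e})$ with $\lambda_j\ge 0$ for $j\le n_i$, with $\lambda_j\psi^j(0)=\lambda_j g^j(\overline x(T))=0$ for $1\le j\le n_i$, and with $\sum_j\lambda_j D\psi^j(0)\cdot a\le 0$ for all $a\in\R^N_+$. Dividing the vector by $\sum_j|\lambda_j|>0$ normalizes it to (i) without disturbing the homogeneous conditions (ii) and (iii). For (iv) I would specialize the inequality to $a=e_i$, the $i$-th canonical basis vector of $\R^N_+$, which collapses the sum over $i$ to its single $i$-th term and produces exactly the stated estimate. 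The heavy analytic work having been done in Lemmas \ref{lem51}--\ref{lem55}, the only genuinely substantive point here is the feasibility-transfer argument of the second paragraph: one must be sure that every admissible parameter $a$ really yields an admissible process, so that the scalar optimality of $(\overline x,\overline u)$ legitimately passes to $\psi^0$; the rest is chain rule and bookkeeping.
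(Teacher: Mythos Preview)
Your proposal is correct and follows essentially the same route as the paper: reduce to the finite-dimensional problem in $a$ with $\psi^j(a)=g^j(x(T,S,a))$, check that $a=0$ is optimal because admissible $a$'s produce admissible processes, apply Lemma~\ref{lem56}, normalize, and then specialize the inequality to the canonical basis vectors using the chain rule and Lemma~\ref{lem55}. If anything, your feasibility-transfer paragraph is slightly more explicit than the paper's one-line remark that $(\overline x,\overline u)$ is optimal among the $(x(\cdot,S,a),u(\cdot,S,a))$.
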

\begin{proof}
Since the process $(\overline{x}, \overline{u})$ is optimal for $({\mathfrak M})$, it is also optimal among the processes $(x(\cdot,S,a), u(\cdot,S,a))$ when $a$ belongs to a neighborhood of 0 into $\R^N_+$. Recall that $x(\cdot,S,0) = \overline{x}$ and $u(\cdot,S,0) = \overline{u}$. And then 0 is an optimal solution of the following maximisation static problem
\[
\left\{
\begin{array}{rl}
{\rm Maximize} & g^0(x(T,S,a))\\
{\rm when} & a \in \overline{B}(0, \delta_2) \cap \R^n_+\\
\null & \forall j = 1,...,n_i, \; g^j(x(T,S,a)) \geq 0\\
\null & \forall j = n_i+1, ..., n_i +n_e, _; g^j(x(T,S,a) )= 0.
\end{array}
\right.
\]
We use the previous lemma by setting $\psi^j(a) := g^j(x(T,S,a))$. Since the set of the lists of multipliers is a cone the non nullity of $(\lambda_j)_{0 \leq j \leq n_i + n_e}$ permits us to choose it to satisfy (i). The conclusions (ii) and (iii) are given by the previous lemma in a straightforward way. To treat the last condition, note that $D \psi^j(0) = Dg^j(\overline{x}(T))D_a x(T,S,0)$, and then the last conclusion of the previous lemma is: $\forall a \in \R^N_+$, $\sum_{j=0}^{n_i + n_e} \lambda_j Dg^j(\overline{x}(T)).D_a x(T,S,0)a \leq 0$. If $(e_i)_{1 \leq i \leq n}$ denotes the canonical basis of $\R^N$, we obtain 
$$0 \geq \sum_{j=0}^{n_i + n_e} \lambda_j Dg^j(\overline{x}(T))D_a x(T,S,0)a = \sum_{j=0}^{p+q} a_j \lambda_j Dg^j(\overline{x}(T))\frac{\partial x(T,S,0)}{\partial a_i}$$
and we conclude by using Lemma \ref{lem55}.
\end{proof}
The following lemma ensures the existence of multipliers which do not depend of $S \in {\mathfrak S}$.
\begin{lemma}\label{lem58}
There exists $(\lambda_j)_{0 \leq j \leq n_i + n_e} \in \R^{1+ n_i + n_e}$ which satisfies the following properties.
\begin{enumerate}
\item[(i)] $\sum_{j=0}^{n_i + n_e} \vert \lambda_j \vert = 1$
\item[(ii)] $\forall j = 0,...,n_i, \; \lambda_j \geq 0$
\item[(iii)] $\forall j = 1,...,n_i, \; \lambda_j g^j(\overline{x}(T)) = 0$
\item[(iv)] $\forall t \in [0,T]$, $\forall u \in U$, \\
$\sum_{j=0}^{n_i + n_e} \lambda_j Dg^j(\overline{x}(T))X(T,t)[f(t, \overline{x}_t, u) -f(t, \overline{x}_t, \overline{u}(t))] \leq 0$.
\end{enumerate}
\end{lemma}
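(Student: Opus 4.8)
The plan is to pass from the $S$-dependent multipliers produced by Lemma \ref{lem57} to a single list of multipliers valid for all $S$ simultaneously, by a compactness argument based on the finite intersection property. For each $S \in {\mathfrak S}$, let $\Lambda(S)$ denote the set of all $(\lambda_j)_{0 \leq j \leq n_i + n_e} \in \R^{1 + n_i + n_e}$ which satisfy conditions (i)--(iv) of Lemma \ref{lem57} relative to $S$. By Lemma \ref{lem57} each $\Lambda(S)$ is nonempty. Moreover conditions (i)--(iv) are all defined by (non strict) inequalities and equalities whose left-hand sides depend continuously on $(\lambda_j)$, so $\Lambda(S)$ is a closed subset of the unit sphere ${\mathbb S} := \{ (\lambda_j) : \sum_{j=0}^{n_i + n_e} \vert \lambda_j \vert = 1 \}$ of $\R^{1 + n_i + n_e}$, which is compact; hence each $\Lambda(S)$ is itself compact.

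First I would verify that the family $\{ \Lambda(S) : S \in {\mathfrak S} \}$ has the finite intersection property. Given $S_1, \dots, S_k \in {\mathfrak S}$, their union $S := S_1 \cup \dots \cup S_k$, after reordering its points by increasing time, again belongs to ${\mathfrak S}$ (repeated times are allowed, since the definition only requires $t_1 \leq \dots \leq t_N$). Since conditions (i)--(iii) do not depend on $S$ and condition (iv) for $S$ is exactly the conjunction of the conditions (iv) attached to $S_1, \dots, S_k$, we obtain $\Lambda(S) \subseteq \bigcap_{m=1}^k \Lambda(S_m)$; as $\Lambda(S) \neq \emptyset$ this finite intersection is nonempty. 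By compactness of ${\mathbb S}$ it then follows that $\bigcap_{S \in {\mathfrak S}} \Lambda(S) \neq \emptyset$, and any $(\lambda_j)$ in this total intersection already satisfies (i), (ii) and (iii) of the present lemma.

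It remains to deduce conclusion (iv). For $t \in [0,T)$ and $u \in U$, I would apply the membership $(\lambda_j) \in \Lambda(S)$ to the singleton $S := \{ (t, u) \}$, that is $N = 1$, $t_1 = t$, $v_1 = u$, which is admissible precisely because $t < T$; condition (iv) of Lemma \ref{lem57} for this $S$ is exactly the wanted inequality at $(t,u)$. For the endpoint $t = T$ I would pass to the limit $t \to T^-$: by (vii) of Theorem \ref{th34} the map $X(T, \cdot)$ is left-continuous, so $X(T,t) \to X(T,T) = I$, and by the continuity of $f$ and of $[t \mapsto \overline{x}_t]$ the difference $f(t, \overline{x}_t, u) - f(t, \overline{x}_t, \overline{u}(t))$ converges to $f(T, \overline{x}_T, u) - f(T, \overline{x}_T, \overline{u}(T))$, which delivers the inequality at $t = T$ (using the harmless normalization $\overline{u}(T) = \overline{u}(T-)$, the value of $\overline{u}$ at the single point $T$ being irrelevant to the dynamics).

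The main obstacle is the first transfer step: Lemma \ref{lem57} only yields multipliers for each fixed finite perturbation set $S$, with no a priori reason that one list serves every $S$ at once. The device that overcomes this is the observation that enlarging $S$ shrinks $\Lambda(S)$, so that the compactness of the multiplier sphere ${\mathbb S}$ together with the finite intersection property forces a common list to exist; the recovery of (iv) from singletons and the limiting argument at $T$ are then routine.
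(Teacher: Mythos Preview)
Your proof is correct and follows essentially the same route as the paper's: define the closed multiplier sets $\Lambda(S)$ in the compact unit sphere, use that the union of finitely many $S$'s gives an $S^*$ with $\Lambda(S^*)\subset\bigcap\Lambda(S_m)$ to get the finite intersection property, and then specialize to singletons. You are in fact slightly more careful than the paper at the endpoint $t=T$: the paper simply asserts that each $(t,u)\in[0,T]\times U$ lies in ${\mathfrak S}$, whereas the definition of ${\mathfrak S}$ requires $t_N<T$; your limiting argument via the left-continuity of $X(T,\cdot)$ and the normalization $\overline{u}(T)=\overline{u}(T-)$ closes this small gap.
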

The proof of this lemma is completely similar to this one of Lemme 3 in \cite{Mi}. The idea of this proof is the following one: at each $S \in {\mathfrak S}$ we associate the set $K(S)$ as the set of the $(\lambda_j)_{0 \leq j \leq n_i + n_e} \in \R^{1+ n_i + n_e}$ which satisfy the conclusion (i-iv) of Lemma \ref{lem57}. Denoting by ${\mathcal S}(0,1)$ the unit spere of  $\R^{1+ n_i + n_e}$ for the norm $\Vert (\lambda_j)_{0 \leq j \leq n_i + n_e} \Vert := \sum_{i=0}^{n_i + n_e} \vert \lambda_j \vert$, we see that $K(S)$ is nonempty (by Lemma \ref{lem57}), is closed into the compact ${\mathcal S}(0,1)$. For all finite list $(S_k)_{1 \leq k \leq {\ell}}$ of elements of ${\mathfrak S}$, we can build $S^* \in {\mathfrak S}$ such that $K(S^*) \subset \cap_{1 \leq k \leq {\ell}} K(S_k)$ that proves that  $\cap_{1 \leq k \leq {\ell}} K(S_k) \neq \emptyset$. Then the compactness of ${\mathcal S}(0,1)$ and the closedness of the $K(S)$ imply that $\cap_{S \in {\mathfrak S}}K(S) \neq \emptyset$. It suffices to take $(\lambda_j)_{0 \leq j \leq n_i + n_e}  \in \cap_{S \in {\mathfrak S}}K(S)$ and to note that each $(t,u) \in [0,T] \times U$ belongs to ${\mathfrak S}$ to obtain the conclusion (iv) of the lemma.
\vskip1mm
\noindent
{\bf The end of the proof of Theorem \ref{th41}.} The scalar multipliers $\lambda_0$,..., $\lambda_{n_i + n_e}$ are provided by Lemma \ref{lem58}. From lemma \ref{lem58} we see that the conditions (NN), (Si) and (Sl) of Theorem \ref{th41} are fulfilled.We define the function $p : [0,T] \rightarrow \R^{n*}$ by setting
\begin{equation}\label{eq514}
p(t) := \sum_{j=0}^{n_i + n_e} \lambda_j Dg^j(\overline{x}(T))X(T,t).
\end{equation}
From (iv) of Lemma \ref{lem58}, we see that the condition (MP) is fulfilled. Since $X(T,T) = I$ (identity), from \eqref{eq514} we see that the condition (T) is fulfilled. 
Using Proposition \ref{prop35}, we have $X(T,t) = I - \int_t^T X(T, \alpha) \eta^1(\alpha, t - \alpha) d \alpha$, which implies 
\[
\begin{array}{ccl}
p(t)& = &\sum_{j=0}^{n_i + n_e} \lambda_j Dg^j(\overline{x}(T))X(T,t) \\
\null & =&  \sum_{j=0}^{n_i + n_e} \lambda_j Dg^j(\overline{x}(T)) - \int_t^T \sum_{j=0}^{n_i + n_e} \lambda_j Dg^j(\overline{x}(T)) X(T, \alpha) \eta(\alpha, t - \alpha) d \alpha\\
\null & =& \sum_{j=0}^{n_i + n_e} \lambda_j Dg^j(\overline{x}(T)) - \int_t^T p(\alpha) \eta^1(\alpha, t - \alpha) d \alpha
\end{array}
\]
and so we see that the function $[t \mapsto p(t) + \int_t^T p(\alpha) \eta(\alpha, t - \alpha) d \alpha]$ is constant on $[0,T]$. Note that $\alpha > t+r$ implies $t- \alpha < -r$ and then $\eta^1(\alpha, t- \alpha) = 0$ and consequently $\int_t^T p(\alpha) \eta^1(\alpha, t - \alpha) d \alpha = \int_t^{ \min \{t+r,T \} } p(\alpha) \eta^1(\alpha, t - \alpha) d \alpha =\int_t^{ \min \{t+r,T \} } p(\alpha) \eta(\alpha, t - \alpha) d \alpha$ that proves that the condition (AE) holds.
\vskip1mm 
Using (QC), if $p(T) = 0$ since (NN), (Si) and (Sl) hold we obtain a contradiction, and so we have $p(T) \neq 0$ under (QC). Since $X(T,t) = I - \int_t^T R(\xi, t) d \xi$, and since $R$ is bounded, if we choose $\epsilon \in (0, \Vert R \Vert_{\infty}^{-1}]$, then, when $t \in (T- \epsilon, T]$, we obtain that $X(T,t)$ is invertible. Using (TC), we have $p(t) = p(T) X(T,t)$, ans since $p(T) \neq 0$, we obtain $p(t) \neq 0$. And so (A1) is proven. To prove (A2), we proceed by contradiction. We assume that there exists $\tau \in [0,T]$ such that $p(t) = 0$ when $t \in [\tau, \min\{ \tau + r, T \}]$. Note that $\tau < T$ since $p(T) \neq 0$. Using (AE) and (TC) we have, $p(\tau) + \int_{\tau}^{\min \{ \tau + r, T \}} p(\xi) \eta(\xi, \tau - \xi) d \xi = p(T)$ which implies $0 + 0 = p(T)$ that is impossible. And so (A2) is proven and the proof of Theorem \ref{th41} is complete.
\end{document}